\newtheorem{theorem}{Theorem}[section]
\newtheorem{corollary}[theorem]{Corollary}
\newtheorem{lemma}[theorem]{Lemma}
\newtheorem{proposition}[theorem]{Proposition}
\theoremstyle{definition}
\newtheorem{e-definition}[theorem]{Definition}
\theoremstyle{remark}
\newtheorem{remark}[theorem]{Remark}
\numberwithin{equation}{section}
\keywords{quaternions, Wiener algebras, Toeplitz operators, Wiener-Hopf operators}
\subjclass[2010]{13J05,47S,47B35}
\begin{document}
\title{Wiener algebra for the quaternions} %
%\author{Daniel Alpay, Fabrizio Colombo, David P. Kimsey \and Irene Sabadini}
\author[D. Alpay]{Daniel Alpay}
\address{(DA) Department of Mathematics\\
Ben-Gurion University of the Negev\\
Beer-Sheva 84105 Israel} \email{dany@math.bgu.ac.il}

\author[F. Colombo]{Fabrizio Colombo}
\address{(FC) Politecnico di
Milano\\Dipartimento di Matematica\\Via E. Bonardi, 9\\20133
Milano, Italy}
\email{fabrizio.colombo@polimi.it}

\author[D. P. Kimsey]{David P. Kimsey}
\address{(DPK) Department of Mathematics\\
Ben-Gurion University of the Negev\\
Beer-Sheva 84105 Israel}
\email{dpkimsey@gmail.com}

\author[I. Sabadini]{Irene Sabadini}
\address{(IS) Politecnico di
Milano\\Dipartimento di Matematica\\Via E. Bonardi, 9\\20133
Milano, Italy}
\email{irene.sabadini@polimi.it}

\thanks{D. Alpay thanks the Earl Katz family for endowing the chair
which supported his research. D.P. Kimsey gratefully acknowledges the support of a Kreitman postdoctoral fellowship. F. Colombo and I. Sabadini
acknowledge the Center for Advanced Studies of the Mathematical
Department of the Ben-Gurion University of the Negev for the
support and the kind hospitality during the period in which part
of this paper has been written.}

\date{November 19, 2014}%

%\commby{}%
% ----------------------------------------------------------------
\begin{abstract}
We define and study the counterpart of the Wiener algebra in the quaternionic setting, both for the discrete and continuous case. We prove
a Wiener-L\'evy type theorem and a factorization theorem. We give applications to Toeplitz and Wiener-Hopf operators.
\end{abstract}

\maketitle
% ----------------------------------------------------------------

\section{Introduction and preliminaries}
\setcounter{equation}{0}
The Wiener algebra $\mathcal W^{n\times n}$ consists of the functions of the form
$f(t)=\sum_{u\in\mathbb Z}f_ue^{iut}$
where $f_u\in\mathbb C^{n\times n}$ and
$\sum_{u\in\mathbb Z}\|f_u\|<\infty$, where $\|\cdot\|$ denotes the operator norm.
The space $\mathcal W^{n\times n}$ with pointwise multiplication
and the norm defined above is a Banach algebra. We denote by
$\mathcal W_+^{n\times n}$ (resp. $\mathcal W_-^{n\times n}$) the
subalgebra of functions $f \in \mathcal W^{n\times n}$ for which $f_u=0$ for $u<0$ (resp.
$f_u=0$ for $u>0$). When $n=1$ we will denote the Wiener algebra by $\mathcal W$.\\

The Wiener algebra is important both in harmonic and complex analysis.
The celebrated Wiener-L\'evy theorem
characterizes the invertible elements of $\mathcal W^{n\times n}$: an element is invertible in $\mathcal W^{n\times n}$ if and only if it is
pointwise invertible (in $\mathbb C^{n\times n}$). For $n=1$, this result was first proved by Wiener, see \cite{wiener}. It is also a consequence of the theory of commutative Banach algebras, see \cite{rudin}, and for the case of coefficients in a normed ring see \cite{bp}.
In this paper we discuss how to generalize the Wiener algebra to the quaternionic setting, and we prove an analogue of the Wiener-L\'evy theorem and the Wiener-Hopf factorization. The fact that the quaternions are a skew field requires tools different from those used in the complex case.
 Here we consider the case of quaternionic-valued functions, while the case of quaternionic matrix-valued functions
will require other methods. \\

In the sequel, by $\mathbb H$ we denote the real algebra of quaternions. A quaternion $p$ is an element of the form $p=x_0+x_1e_1+x_2e_2+x_3e_3$ where $e_1^2=e_2^2=e_1e_2e_3=-1$. We will denote by $\mathbb S$ the sphere of unitary purely imaginary quaternions. Any element $i\in\mathbb S$ is such that $i^2=-1$ and any two orthogonal elements $i,j\in\mathbb S$ form a new basis $i,j, ij$ of $\mathbb H$. Given a quaternion $p_0$, it determines a sphere $[p_0]$ consisting of all the points of the form $q^{-1}p_0q$ for $q\not=0$.\\
A quaternionic valued function $f(p)$ belongs to the (discrete) quaternionic Wiener algebra $\mathcal W_{\mathbb H}$ if it is of the form $\sum_{u\in\mathbb Z} p^u f_u$ where $\sum_{u\in\mathbb Z} |f_u|<\infty$. The sum of two elements in $\mathcal W_{\mathbb H}$ is defined in the natural way while their product, denoted by $\star$, is obtained by taking the convolution of the coefficients (see \cite{fliess} and Section 2). In order to relate the  $\star$-product of two elements in $\mathcal W_{\mathbb H}$ with the pointwise product  it is necessary to introduce a suitable map, that we denote by $\omega$ which allows to associate to the values of a quaternionic function to a $2\times 2$ matrix with complex entries. This map is suitably defined by using the more classical map $\chi$ which, once that one fixes an imaginary unit $i$ and another imaginary unit, say $j$, orthogonal to $i$,  associates to $p=z+wj$ the matrix
$$
\begin{pmatrix} z & w\\
-\overline{w}& \overline{z}\end{pmatrix}.
$$
The new map $\omega$ is crucial to characterize the invertible elements in the Wiener algebra, see Theorem \ref{wiener}. It is interesting to note a specific feature of the class of functions we are considering:  one of the equivalent conditions in this result is required only on one specific complex plane (so on a subset of $\mathbb H$) while another condition is required to hold globally. A similar result can be proved also for the subalgebra $\mathcal W_{\mathbb H, +}$ (resp. $\mathcal W_{\mathbb H, -}$) of elements such that $f_u=0$ for $u<0$ (resp. $u>0$).\\
The definition of quaternionic Wiener algebra in the continuous case is somewhat more complicated, see Section 4.
Also in this case we can define in a suitable way a map, still denoted by $\omega$, which allows to translate the image through $\omega$ of the $\star$-product of two elements in the algebra with the pointwise product of the images of the two elements. Also in this case, we characterize the invertibility in the algebra. In this case, see Theorem \ref{thm4.3}, we  still have equivalent conditions, and one condition holds on one specific complex plane if and only if it holds for any complex plane. Unlike the discrete case, the notions of positivity and of factorization depends on the choice of an imaginary unit.\\
Our study shows that the Wiener algebra in the quaternionic case not only requires suitable tools but also shows features which do not appear in the complex case.

The paper consists of four sections besides the introduction. Sections 2 and 3 are devoted to the discrete case, respectively the study of the
discrete Wiener algebra and Toeplitz operators. Sections 4 and 5 deal with the continuous case. Specifically, in Section 4 we study the continuous Wiener algebra and in Section 5 we recall results on the quaternionic Hardy space of the half space and give an
application to Wiener-Hopf operators.
\section{The Wiener algebra}
\setcounter{equation}{0}

In this section we define the counterpart of the Wiener
algebra in the quaternionic setting.

\begin{e-definition}
We denote by $\mathcal W_{\mathbb H}$ the set of functions of the
form
\begin{equation}\label{effe}
f(p)=\sum_{u\in\mathbb Z}p^uf_u,
\end{equation}
where the $f_u$, $u \in \mathbb{Z}$, are quaternions such that
\[
\sum_{u\in\mathbb Z}|f_u|<\infty.
\]
\end{e-definition}
\begin{remark}
Quaternionic power series have been considered, though in a different context, namely in the framework of slice hyperholomorphic functions, in \cite{css} and \cite{gss}.
\end{remark}

The elements of $\mathcal W_{\mathbb H}$ are continuous functions on the
closed unit ball $\overline{ \mathbb{B} }$.  The set $\mathcal W_{\mathbb H}$ be endowed with
the multiplication (see \cite{fliess})
\[
(f\star g)(p)=\sum_{u\in\mathbb Z}p^u (\sum_{k\in\mathbb Z}f_{u-k}g_k).
\]
By extending the analogous result for polynomials with coefficients in a non commutative ring to power series (see \cite{lam})  we have the formula
\begin{equation}\label{pointwise}
(f\star g)(p)= f(p)g(f(p)^{-1}pf(p))
\end{equation}
which holds for $f(p)\not=0$ while for $f(p)=0$ we have $(f\star g)(p)=0$.
\begin{proposition}
$\mathcal W_{\mathbb H}$ endowed with the $\star$-multiplication is a real algebra.
\end{proposition}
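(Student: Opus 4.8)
The plan is to transport every algebra axiom down to the level of coefficient sequences, exploiting the fact that the modulus on $\mathbb{H}$ is multiplicative, $|pq| = |p|\,|q|$, so that $\mathcal{W}_{\mathbb{H}}$ behaves exactly like a convolution algebra built on $\ell^1(\mathbb{Z},\mathbb{H})$.

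First I would record that the correspondence $f \mapsto (f_u)_{u\in\mathbb Z}$ between $\mathcal{W}_{\mathbb H}$ and the space of absolutely summable two-sided quaternionic sequences is a bijection. Surjectivity is immediate from the definition; for injectivity, restrict $f$ to the unit circle of the complex plane $\mathbb{C}_i := \mathbb R + \mathbb R i$ for a fixed $i\in\mathbb S$, write each $f_u = a_u + b_u j$ with $a_u,b_u\in\mathbb C_i$ and $j\in\mathbb S$ orthogonal to $i$, and use the decomposition $\mathbb H = \mathbb C_i\oplus\mathbb C_i j$ to split $\sum_u e^{iut}f_u$ into two $\mathbb C_i$-valued Fourier series; uniqueness of Fourier coefficients then forces $a_u=b_u=0$. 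Granting this identification, addition and multiplication by real scalars act coefficientwise, and since $\mathbb R$ is central in $\mathbb H$ these operations make $\mathcal W_{\mathbb H}$ a real vector space.

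Next, closure under $\star$: the coefficient of $p^u$ in $f\star g$ is $(f\star g)_u = \sum_{k\in\mathbb Z} f_{u-k}g_k$, and by multiplicativity of the modulus together with Tonelli's theorem for nonnegative double series,
\[
\sum_{u\in\mathbb Z} |(f\star g)_u| \;\le\; \sum_{u\in\mathbb Z}\sum_{k\in\mathbb Z} |f_{u-k}|\,|g_k| \;=\; \Big(\sum_{u\in\mathbb Z}|f_u|\Big)\Big(\sum_{k\in\mathbb Z}|g_k|\Big) < \infty,
\]
so $f\star g\in\mathcal W_{\mathbb H}$ (and $\|f\star g\|\le\|f\|\,\|g\|$ as a byproduct). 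Bilinearity of $\star$ over $\mathbb R$ is inherited from bilinearity of convolution and centrality of $\mathbb R$.

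The one point needing care, precisely because $\mathbb H$ is noncommutative, is associativity. Expanding, the $p^u$-coefficient of $(f\star g)\star h$ is $\sum_{k}\sum_{l} f_{u-k-l}\,g_l\,h_k$, while that of $f\star(g\star h)$ is $\sum_{m}\sum_{j} f_{u-m}\,g_{m-j}\,h_j$; the substitution $m=k+l$, $j=k$ identifies the two expressions, and the interchange of summations is legitimate since $\sum_{k,l}|f_{u-k-l}|\,|g_l|\,|h_k| \le \|f\|\,\|g\|\,\|h\| < \infty$. The essential observation is that the factors $f,g,h$ retain their left-to-right order throughout, so noncommutativity never actually obstructs the computation. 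Finally, the constant function $\mathbf 1$ with coefficient sequence $(\mathbf 1)_0 = 1$, $(\mathbf 1)_u = 0$ for $u\neq 0$, satisfies $\mathbf 1\star f = f\star\mathbf 1 = f$, so the algebra is unital. I expect the only (minor) obstacle to be the bookkeeping in the triple convolution and verifying that each rearrangement of series is covered by absolute convergence; there is no deeper difficulty here.
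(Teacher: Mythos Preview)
Your proof is correct and its core step---the estimate $\sum_u |(f\star g)_u| \le \|f\|\,\|g\|$ showing closure under $\star$---is exactly the inequality the paper uses as its entire proof. You go further by spelling out injectivity of the coefficient map, associativity, bilinearity, and the unit, all of which the paper tacitly assumes; this is more thorough but not a different approach.
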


\begin{proof}
The claim follows from
\[
\sum_{u\in\mathbb Z}|\sum_{k\in\mathbb Z}f_{u-k}g_k|\le
\sum_{u\in\mathbb Z}\sum_{k\in\mathbb Z}|f_{u-k}|\cdot|g_k|
=(\sum_{u\in\mathbb Z}|f_u|)(\sum_{k\in\mathbb Z}|g_k|).
\]
\end{proof}
We will call $\mathcal W_{\mathbb H}$ the quaternionic Wiener algebra.\smallskip

Let $i,j\in\mathbb S$ be orthogonal (that is, such that $ij+ji=0$). Then any quaternion $p$ can be written as $p=z+wj$,
where $z,w$ complex numbers belonging to the complex plane $\mathbb C_i= \{x+iy :  x,y\in\mathbb R \}$.
 In the sequel, we will sometimes write $\mathbb C$, for short. We have an injective homomorphism of rings $\chi=\chi_{i,j} :\, \mathbb H\to \mathbb C^{2\times 2}$ defined by
$$
\chi(p)=\begin{pmatrix} z & w\\
-\overline{w}& \overline{z}\end{pmatrix}.
$$
Let $f$ be as in \eqref{effe}. Then, restricting
$p=e^{it}$ to the unit circle, writing $f(e^{it})=a(e^{it})+b(e^{it})j$ we
obtain
\begin{equation}
\label{chi-f}
\chi(f(e^{it}))=\begin{pmatrix}a(e^{it})&b(e^{it})\\
-\overline{e^{it}}&\overline{a(e^{it})}\end{pmatrix},
\end{equation}
where the functions
\begin{equation}
\label{batsheva}
a(e^{it})=\sum_{u\in\mathbb Z}e^{iut}a_u\quad\text{and}\quad
b(e^{it})=\sum_{u\in\mathbb Z}e^{iut}b_u
\end{equation}
belong to the classical Wiener algebra, denoted by $\mathcal W  := \mathcal{W}^{1 \times 1}$. Thus,
\begin{equation}
\label{place-de-la-nation} \chi(f(e^{it}))\in\mathcal W^{2\times
2}.
\end{equation}
It is easy to verify that the map $\chi$ is not multiplicative with respect to  the $\star$-product, in fact
in general
\[
\chi((f\star g)(e^{i t} )) \not=\chi((f)(e^{i t})) \chi((g)(e^{i t})).
\]
We now introduce another map depending on the choice of two orthogonal imaginary units $i,j\in\mathbb S$ and denoted for simplicity by $\omega := \omega_{i,j}$,  which acts from the
values of functions (belonging to a given set of functions) to the set of $2\times 2$ matrices whose elements are complex-valued  functions when restricted to the complex plane $\mathbb C$ and it is defined by
\begin{equation}\label{omegaij}
(\omega(f))(z)=\begin{pmatrix}a(z)&b(z)\\ -\overline{b(\bar z)}& \overline{a(\bar z)}\end{pmatrix}, \quad z \in \overline{ \mathbb{B} \cap \mathbb{C} }.
\end{equation}
Note that the definition of $\omega$ makes sense if one substitutes $z$ with any $p\in\mathbb B$.
\begin{lemma} Let $f,g\in\mathcal W_{\mathbb H}$. Then it holds that
\begin{equation}
(\omega(f\star g))(z)=(\omega(f))(z)(\omega(g))(z), \quad z \in \overline{ \mathbb{B} \cap \mathbb{C} }. \label{Malakoff-ligne-13}
\end{equation}

\end{lemma}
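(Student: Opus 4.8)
The plan is to read everything through the splitting $f_u=a_u+b_uj$, with $a_u,b_u\in\mathbb C:=\mathbb C_i$, so that the $\star$-product becomes an honest convolution of complex scalars, and then to check that $\omega$ converts this convolution into ordinary $2\times2$ matrix multiplication. Concretely, fix orthogonal $i,j\in\mathbb S$ and write $f_u=a_u+b_uj$ and $g_u=c_u+d_uj$; by \eqref{batsheva} the series $a,b,c,d$ lie in the classical Wiener algebra $\mathcal W$, and by \eqref{omegaij}
\[
(\omega(f))(z)=\begin{pmatrix}a(z)&b(z)\\ -\overline{b(\bar z)}&\overline{a(\bar z)}\end{pmatrix},\qquad
(\omega(g))(z)=\begin{pmatrix}c(z)&d(z)\\ -\overline{d(\bar z)}&\overline{c(\bar z)}\end{pmatrix}.
\]
The coefficient of $p^u$ in $f\star g$ is $h_u=\sum_{k\in\mathbb Z}f_{u-k}g_k$, and I decompose $h_u=\alpha_u+\beta_uj$ in the same way.

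The only nontrivial algebraic step is to expand $f_{u-k}g_k$ inside $\mathbb H$ using the two non-commutativity facts that are actually needed, $j\lambda=\bar\lambda j$ for $\lambda\in\mathbb C$ and $j^2=-1$. This yields $f_{u-k}g_k=(a_{u-k}c_k-b_{u-k}\overline{d_k})+(a_{u-k}d_k+b_{u-k}\overline{c_k})\,j$, hence
\[
\alpha_u=\sum_{k\in\mathbb Z}\bigl(a_{u-k}c_k-b_{u-k}\overline{d_k}\bigr),\qquad
\beta_u=\sum_{k\in\mathbb Z}\bigl(a_{u-k}d_k+b_{u-k}\overline{c_k}\bigr).
\]

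Next I would form the generating functions $\sum_uz^u\alpha_u$ and $\sum_uz^u\beta_u$. Writing $z^u=z^{u-k}z^k$ and re-indexing the double sums --- all rearrangements being legitimate because $\sum_u|f_u|<\infty$ and $\sum_u|g_u|<\infty$, together with $|a_u|,|b_u|\le|f_u|$ and $|c_u|,|d_u|\le|g_u|$ --- one obtains
\[
\sum_uz^u\alpha_u=a(z)c(z)-b(z)\sum_kz^k\overline{d_k},\qquad
\sum_uz^u\beta_u=a(z)d(z)+b(z)\sum_kz^k\overline{c_k}.
\]
The key observation, and the reason $\omega$ is defined with $\bar z$ rather than $z$ in its lower row, is that $\sum_kz^k\overline{d_k}=\overline{\sum_k\bar z^kd_k}=\overline{d(\bar z)}$, and likewise $\sum_kz^k\overline{c_k}=\overline{c(\bar z)}$. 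Therefore the first row of $(\omega(f\star g))(z)$ is $\bigl(a(z)c(z)-b(z)\overline{d(\bar z)},\ a(z)d(z)+b(z)\overline{c(\bar z)}\bigr)$, which is precisely the first row of the product $(\omega(f))(z)(\omega(g))(z)$.

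Finally, writing $\alpha(z):=\sum_uz^u\alpha_u$ and $\beta(z):=\sum_uz^u\beta_u$, the second row of $(\omega(f\star g))(z)$ is, by \eqref{omegaij}, $\bigl(-\overline{\beta(\bar z)},\ \overline{\alpha(\bar z)}\bigr)$. Since the identities $\alpha(z)=a(z)c(z)-b(z)\overline{d(\bar z)}$ and $\beta(z)=a(z)d(z)+b(z)\overline{c(\bar z)}$ hold at every point of the domain, in particular at $\bar z$, conjugating them gives $\overline{\alpha(\bar z)}=\overline{a(\bar z)}\,\overline{c(\bar z)}-\overline{b(\bar z)}\,d(z)$ and $-\overline{\beta(\bar z)}=-\overline{b(\bar z)}\,c(z)-\overline{a(\bar z)}\,\overline{d(\bar z)}$, and these are exactly the $(2,2)$ and $(2,1)$ entries of $(\omega(f))(z)(\omega(g))(z)$. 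Thus \eqref{Malakoff-ligne-13} holds. I do not anticipate a genuine obstacle here: the substance is the coefficient identity for $h_u$ together with the elementary fact $\sum_kz^k\overline{d_k}=\overline{d(\bar z)}$; the only points requiring care are moving the single $j$ through the complex coefficients in the correct order and keeping the two kinds of conjugate distinct (namely $\overline{d(\bar z)}$ versus $\overline{d(z)}$).
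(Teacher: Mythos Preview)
Your proof is correct, but it takes a different route from the paper's. The paper's argument is more conceptual: it observes first for a monomial $p^na$ (with $a\in\mathbb H$) that $(\omega(p^na))(z)=z^n\chi(a)$, and hence for general $f(p)=\sum_np^na_n$ one has $(\omega(f))(z)=\sum_nz^n\chi(a_n)$. Since $\chi:\mathbb H\to\mathbb C^{2\times2}$ is a ring homomorphism and $z$ commutes with everything, the identity \eqref{Malakoff-ligne-13} then follows immediately from $\chi(a_nb_m)=\chi(a_n)\chi(b_m)$ and the standard Cauchy-product manipulation. You instead split each coefficient into its $\mathbb C_i$-components, compute the quaternionic product $f_{u-k}g_k$ by hand, and verify the four matrix entries one at a time. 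Your approach has the virtue of making the role of the $\bar z$ in the definition of $\omega$ completely transparent (via the identity $\sum_kz^k\overline{d_k}=\overline{d(\bar z)}$), whereas the paper's approach packages this inside the single statement that $\chi$ is multiplicative; on the other hand, the paper's argument is shorter and generalizes at once to any injective ring homomorphism in place of $\chi$.
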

\begin{proof}
Let $f(p)=p^na$ and $g(p)=p^mb$ for $n,m\in\mathbb Z$ and $a,b\in\mathbb H$. Then,
\[
(f\star g)(p)=p^{n+m}ab,
\]
and we have
\[
(\omega(f))(z^n)=z^n\chi(a),
\]
and similarly
\[
(\omega(g))(z)=z^m\chi(b),\quad
(\omega(f\star g))(z)=z^{n+m}\chi(ab).
\]
Hence
\[
(\omega(f\star g))(z)= z^{n+m}\chi(ab)=z^{n+m}\chi(a)\chi(b)=(\omega(f))(z)(\omega(g))(z).
\]
More generally, if $f(p)=\sum_{n\in\mathbb Z}p^na_n\in\mathcal W_{\mathbb H}$, then
\[
(\omega(f))(z)=\sum_{n\in\mathbb Z}z^{n}\chi(a_n)\in\mathcal W^{2\times 2}.
\]
Moreover, if $g(p)=\sum_{n\in\mathbb Z}p^n b_n\in\mathcal W_{\mathbb H}$, then
\[
\begin{split}
((\omega(f))(z))((\omega(g))(z))&=
\left(\sum_{n\in\mathbb Z}z^n\chi(a_n)\right)\left(\sum_{n\in\mathbb Z} z^n \chi(b_n)\right)\\
&=\sum_{u\in\mathbb Z} z^u \left(\sum_{n+m=u}\chi(a_n)\chi(b_m)\right)\\
&=\sum_{u\in\mathbb Z} z^u\chi\left(\sum_{n+m=u} a_nb_m \right)\\
&=(\omega(f\star g))(z).
\end{split}
\]
\end{proof}

\begin{theorem}
\label{wiener}
Let $f\in\mathcal W_{\mathbb H}$. The following are equivalent{\rm :}
\begin{enumerate}
\item[(i)] $f$ is invertible in $\mathcal W_{\mathbb H}${\rm ;}
\item[(ii)]
Let $i$ be any fixed element in $\mathbb S$, then
$(\det\omega(f))(z)\not=0$ for all $z\in \partial \mathbb B\cap \mathbb C_i${\rm ;}
\item[(iii)] The function $f$ does not vanish on $\partial\mathbb B$.
\end{enumerate}
\end{theorem}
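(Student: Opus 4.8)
The plan is to prove the cycle of implications $(i) \Rightarrow (iii) \Rightarrow (ii) \Rightarrow (i)$, using the multiplicativity of $\omega$ established in Lemma \ref{Malakoff-ligne-13} together with the classical Wiener-L\'evy theorem for $\mathcal W^{2\times 2}$ and the representation formula \eqref{pointwise} linking the $\star$-product to the pointwise product.

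First, for $(i) \Rightarrow (iii)$: suppose $f \star g = g \star f = 1$ for some $g \in \mathcal W_{\mathbb H}$. Applying $\omega$ and using \eqref{Malakoff-ligne-13} gives $(\omega(f))(z)(\omega(g))(z) = (\omega(g))(z)(\omega(f))(z) = I_2$ for all $z \in \overline{\mathbb B \cap \mathbb C_i}$, so $\det(\omega(f))(z) \neq 0$ there; in particular this already gives $(ii)$, so I will actually organize the argument as $(i) \Rightarrow (ii)$ directly. To get $(iii)$ from $(i)$, observe that if $f(p_0) = 0$ for some $p_0 \in \partial \mathbb B$, then by \eqref{pointwise} we would have $(f \star g)(p_0) = 0 \neq 1$, a contradiction. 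So $(i)$ gives both $(ii)$ and $(iii)$.

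Next, $(iii) \Rightarrow (ii)$: here the key point is to relate the vanishing of $f$ on $\partial \mathbb B$ to the vanishing of $\det \omega(f)$ on $\partial \mathbb B \cap \mathbb C_i$. Writing $f(e^{it}) = a(e^{it}) + b(e^{it})j$, a direct computation from \eqref{omegaij} gives $(\det \omega(f))(z) = a(z)\overline{a(\bar z)} + b(z)\overline{b(\bar z)}$. The crucial observation is that this quantity, evaluated at $z = e^{it} \in \partial \mathbb B \cap \mathbb C_i$, equals $|a(e^{it})|^2 + |b(e^{it})|^2 = |f(e^{it})|^2$ up to identifying $\overline{a(\bar z)}$ with $\overline{a(z)}$ on the circle (since $\bar z = z^{-1}$ for $|z|=1$ and $\overline{a(z^{-1})}$ need not equal $\overline{a(z)}$ — this subtlety must be handled: in fact one uses that $|f(e^{it})|^2 = |a(e^{it})|^2 + |b(e^{it})|^2$ directly since $j \perp \mathbb C_i$, and separately that $\det \chi(p) = |p|^2$, so $\det (\omega(f))(e^{it}) = \det \chi(f(e^{it})) = |f(e^{it})|^2$). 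Thus $f(e^{it}) \neq 0$ forces $\det(\omega(f))(e^{it}) \neq 0$. I should also note that the statement of $(ii)$ is independent of the choice of $i \in \mathbb S$; this follows because $f$ not vanishing on $\partial \mathbb B$ is a choice-free condition, and $(iii)$ sits in the middle of the cycle.

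Finally, $(ii) \Rightarrow (i)$ is where the main work lies. Fix $i$ as in $(ii)$. Since $\omega(f) \in \mathcal W^{2 \times 2}$ (shown in the proof of the Lemma) and $\det(\omega(f))(z) \neq 0$ on $\partial \mathbb B \cap \mathbb C_i$, the classical Wiener-L\'evy theorem produces $G \in \mathcal W^{2\times 2}$ with $\omega(f) G = G \omega(f) = I_2$ on the circle. The obstacle — and the genuinely quaternionic part of the argument — is to show that $G$ is of the form $\omega(g)$ for some $g \in \mathcal W_{\mathbb H}$, i.e. that $G$ has the special structure $\begin{pmatrix} c(z) & d(z) \\ -\overline{d(\bar z)} & \overline{c(\bar z)} \end{pmatrix}$. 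I expect this to follow by uniqueness of the inverse in $\mathcal W^{2\times 2}$: the matrices of the special $\omega$-form constitute a subalgebra of $\mathcal W^{2\times2}$ closed under the relevant operations, and if $M$ has $\omega$-form then so does its adjugate divided by $\det$, provided one checks that $\det(\omega(f))(z)$ — which one shows satisfies $\overline{(\det\omega(f))(\bar z)} = (\det \omega(f))(z)$, hence is ``real'' in the appropriate sense and invertible in $\mathcal W$ when nonvanishing — behaves correctly. Concretely, $(\omega(f))^{-1}$ computed via Cramer's rule is $\frac{1}{\det \omega(f)}\begin{pmatrix} \overline{a(\bar z)} & -b(z) \\ \overline{b(\bar z)} & a(z)\end{pmatrix}$, and one verifies this has $\omega$-form because $\det \omega(f)$ has the symmetry above. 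Then setting $g = \omega^{-1}(G)$ and applying Lemma \ref{Malakoff-ligne-13} together with the injectivity of $\omega$ gives $f \star g = g \star f = 1$, completing the cycle. I would close by remarking that the same argument, using $\mathcal W_+^{2\times 2}$ in place of $\mathcal W^{2\times 2}$ and the analyticity-preserving version of Wiener-L\'evy, yields the asserted analogue for $\mathcal W_{\mathbb H, +}$ and $\mathcal W_{\mathbb H, -}$.
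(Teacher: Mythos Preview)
Your implications $(i)\Rightarrow(ii)$, $(ii)\Rightarrow(i)$, and $(i)\Rightarrow(iii)$ are all fine and match the paper's arguments (the last one, via the pointwise formula \eqref{pointwise}, is in fact a clean shortcut the paper does not take). The genuine gap is in $(iii)\Rightarrow(ii)$.

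You claim that on the unit circle $\det(\omega(f))(e^{it})=\det\chi(f(e^{it}))=|f(e^{it})|^2$. This is false: $\omega(f)(e^{it})$ and $\chi(f(e^{it}))$ are \emph{different} matrices. The bottom row of $\omega(f)(z)$ involves $\overline{a(\bar z)}$ and $\overline{b(\bar z)}$, so at $z=e^{it}$ one has
\[
\det(\omega(f))(e^{it})=a(e^{it})\,\overline{a(e^{-it})}+b(e^{it})\,\overline{b(e^{-it})},
\]
which is not $|a(e^{it})|^2+|b(e^{it})|^2$ in general. The paper's own Remark after the theorem gives the decisive counterexample: for $f(p)=p-j$ one has $a(z)=z$, $b(z)=-1$, hence $\det(\omega(f))(z)=z^2+1$, which vanishes at $z=\pm i$; yet $|f(e^{it})|^2=|e^{it}|^2+|-1|^2=2\neq 0$ for every $t$. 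So non-vanishing of $f$ on the circle $\partial\mathbb B\cap\mathbb C_i$ does \emph{not} imply $(ii)$; one genuinely needs non-vanishing on the full three-sphere $\partial\mathbb B$, and your argument never uses that stronger hypothesis.

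What is actually required for $(iii)\Leftrightarrow(ii)$ is the paper's identification $(\det\omega(f))(z)=(f\star f^c)\big|_{\mathbb C_i}(z)$, where $f^c(p)=\sum_u p^u\overline{f_u}$. Since $f\star f^c$ has real coefficients, its zero set is a union of real points and whole spheres $[p_0]$; and one shows that $f\star f^c$ vanishes on $[p_0]$ precisely when $f$ (equivalently $f^c$) has a zero somewhere on $[p_0]$. Thus $\det\omega(f)$ vanishes at some point of $\partial\mathbb B\cap\mathbb C_i$ if and only if $f$ vanishes at some point of $\partial\mathbb B$. This sphere-by-sphere analysis of zeros is the missing ingredient in your proposal.
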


\begin{proof}
The implication ${\rm (i)} \Longrightarrow {\rm (ii)}$ is a direct consequence of \eqref{Malakoff-ligne-13}: if
there exists $g\in\mathcal W_{\mathbb H}$ such that $f\star g=1$
then
\[
I_2=\begin{pmatrix}1&0\\0&1\end{pmatrix}=(\omega(f)(e^{it}))(\omega(g)(e^{it}))
\]
and so $\omega(f)(e^{it})$ is invertible for every $t\in[0,2\pi)$.\smallskip

To prove the converse we first note that $\omega(f)(z)\in\mathcal
W^{2\times 2}$. By the classical matricial Wiener-L\'evy theorem
the condition $\det\omega(f)(e^{it})\not=0$ for all $t\in[0,2\pi)$
implies that $\omega(f)$ is invertible in $\mathcal W^{2\times
2}$. Let $G\in\mathcal W^{2\times 2}$ be such that $\omega(f)G=I_2$.
By the formula for the inverse of a $2\times 2$ matrix we have
that $G$ is of the form
\[
G(e^{it})=\begin{pmatrix}c(e^{it})&d(e^{it})\\
-\overline{d(e^{-it})}&\overline{c(e^{-it})}\end{pmatrix},
\]
where $c(e^{-it}) = \overline{a(e^{-it})}/(\det\omega(f))(e^{it})$ and $d (e^{-it}) =-b(e^{it})/(\det\omega(f))(e^{it})$ so they belong to $\mathcal W$. Thus we can write
\[
c(e^{it})=\sum_{n\in\mathbb Z}e^{int}c_n\quad\text{and}\quad
d(e^{it})=\sum_{n\in\mathbb Z}e^{int}d_n.
\]
If we set
\[
g(e^{it})=\sum_{n\in\mathbb Z}e^{int}(c_n+d_nj),
\]
then $g\in\mathcal W_{\mathbb H}$. Since $\omega(f)\omega(g)=1$ on the unit circle, if $f(p)=\sum_{n\in\mathbb Z} p^nf_n$ and $g(p)=\sum_{n\in\mathbb Z} p^ng_n$, we have
\[
\chi\left(\sum_{n+m=u} f_ng_m\right)=\begin{cases}\, 0\,\,\,{\rm if}\,\,u\not=0,\,\\
                                          \, 1\,\,{\rm if}\,\, u=0.\end{cases}
\]
Hence $(f\star g)(e^{it})=1$ and thus $f\star g=1$ everywhere, since the latter is uniquely determined by its values on
the unit circle. \\

We now show that (ii) and (iii) are equivalent. An easy computation, using the notation in \eqref{omegaij}, shows that for some fixed $i\in\mathbb S$
 $$
 (\det \omega(f))(z)= a(z)\overline{a(\bar z)}+ b(z)\overline{b(\bar z)}=\sum_{u\in\mathbb Z} z^u (\sum_{k\in\mathbb Z}a_{u-k}\overline{a_k}+b_{u-k}\overline{b_k}),
$$
where the functions $a$ and $b$ are defined by \eqref{batsheva}.\smallskip

Given $f(p)=\sum_{u\in\mathbb Z} p^u f_u\in\mathcal W_{\mathbb H}$ we define the function $f^c(p)=\sum_{u\in\mathbb Z} p^u \overline{f_u}$ which still belongs to $\mathcal W_{\mathbb H}$.  The function $(f\star f^c)(p)= \sum_{u\in\mathbb Z} p^u (\sum_{k\in\mathbb Z}f_{u-k}\overline{f_k})$ has real coefficients, thus the zeros of its restriction  $(f\star f^c)_{|\mathbb C}(z)$ to $\mathbb C$ has zeros which are real points and/or complex conjugate points. By the validity of the representation formula for power series, see \cite{css}, it follows that $(f\star f^c)(p)$ has zero set consisting of real points and/or spheres.
 By formula (\ref{pointwise}) we have that  $(f\star f^c)(p_0)=0$ implies $f(p_0)=0$ or $f(p_0)\not=0$ and  $f^c (f(p_0)^{-1}p_0f(p_0))=0$. Note that the element $f(p_0)^{-1}p_0f(p_0)$ belongs to $[p_0]$. However the zeros of $f^c$ on $[p_0]$ are in one-to-one correspondence with the zeros of $f$ on $[p_0]$, in fact
  $$
  f(x+iy)=\sum_{u\in\mathbb Z}(x+iy)^u f_u= \sum_{u\in\mathbb Z}(s_u(x,y)+it_u(x,y)) f_u=s(x,y)+it(x,y)
  $$
  where the functions $s_u$, $t_u$ are real valued.  Similarly,
\[
f^c(x+iy)=\sum_{u\in\mathbb Z}(x+iy)^u \bar f_u= \sum_{u\in\mathbb Z}(s_u(x,y)+it_u(x,y)) \bar f_u=\overline{s(x,y)}+i\overline{t(x,y)}.
\]
If $f$ vanishes at all points of $[p_0]$ then $s(x_0,y_0)=t(x_0,y_0)=0$ and $f^c$ vanishes at all points of $[p_0]$. If $p_0$ is the only zero of $f$ on $[p_0]$ then $t(x_0,y_0)\not=0$ and an immediate computation shows that
  $f^c(x_0+\tilde{\iota} y_0)=0$ where $\tilde{\iota} =- \overline{t(x_0,y_0)} i\overline{t(x_0,y_0)}\,^
  {-1}$. The converse follows in an analogous way, since $(f^c)^c=f$.
  Thus if $f^c$ has a zero belonging to the sphere $[p_0]$ also $f$ must have a zero belonging to the same sphere.
 Now we observe that since $f_u=a_u+b_uj$ and $\overline{f_u}=\overline{a_u}-b_uj$ it is immediate  that
 \[
 \begin{split}
(f\star f^c)_{|\mathbb C}(z)&=\sum_{u\in\mathbb Z}p^u (\sum_{k\in\mathbb Z}f_{u-k}\overline{f_k})=\sum_{u\in\mathbb Z}p^u (\sum_{k\in\mathbb Z}(a_{u-k}+b_{u-k}j)(\bar a_{k}-b_{k}j))\\
&=\sum_{u\in\mathbb Z}p^u \{ \sum_{k\in\mathbb Z}(a_{u-k}\bar a_k+b_{u-k}\bar b_k+(b_{u-k} a_{k}-a_{u-k}b_{k})j)\}
\end{split}
\]
 and so $(\det \omega(f))(z)=(f\star f^c)_{|\mathbb C}(z)$.\\
If (ii) holds then $f$ cannot have zeros on $\partial \mathbb B$ otherwise, if $f(p_0)=0$ we have that $f\star f^c$ vanishes on $[p_0]$ and in particular on $[p_0]\cap\mathbb C$ and so $\det(\omega(f))$ vanishes on $\partial \mathbb  B\cap\mathbb C$. Conversely, if $f$ does not vanish on $\partial \mathbb B$ neither $f^c$ vanishes there and so $f\star f^c$ does not have zeros on $\partial \mathbb B$ and thus $\det(\omega(f))$ does not vanish on $\partial \mathbb B\cap\mathbb C$.   \end{proof}
\begin{remark}
Note that condition (ii) holds on a fixed plane while condition (iii) refers to the whole boundary of  $\mathbb B$.
Consider, for example, $f(p)=p-j$. It is immediate that $f$ does not have any zero on the complex plane $\mathbb C_i$. However,
$\det(\omega(f))(e^{\pm i\pi})=0$.
\end{remark}
\begin{e-definition}
We denote by $\mathcal W_{\mathbb H,+}$ (resp. $\mathcal W_{\mathbb H,-}$) the set of elements $f(p)=\sum_{n\in\mathbb N}
p^nf_n\,\in\,\mathcal W_{\mathbb H,}$ for which $f_n=0$ for $n<0$ (resp.  for $n>0$).
\end{e-definition}

It is immediate that $\mathcal W_{\mathbb H,+}$ and $\mathcal W_{\mathbb H,-}$ are subalgebras of $\mathcal W_{\mathbb H}$.
The question of invertibility of an element in $\mathcal W_{\mathbb H,+}$ can be asked in $\mathcal W_{\mathbb H,+}$ and in
$\mathcal W_{\mathbb H}$. For instance, $f(p)=p$ is invertible in $\mathcal W_{\mathbb H}$, but not in $\mathcal W_{\mathbb H,+}$.

\begin{theorem}
Let $f\in\mathcal W_{\mathbb H,+}$. The following are equivalent{\rm :}
\begin{enumerate}
\item[(i)] The function $f$ is invertible in $\mathcal W_{\mathbb H,+}${\rm ;}
\item[(ii)]
Let $i$ be any fixed element in $\mathbb S$, then
$(\det\omega(f))(z)\not=0$ for all $z\in \overline{\mathbb B\cap\mathbb C_i}${\rm ;}
\item[(iii)] The function $f$ does not vanish on $\overline{\mathbb B}$.
\end{enumerate}
\end{theorem}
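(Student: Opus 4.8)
The plan is to follow the proof of Theorem \ref{wiener} almost verbatim, the only structural change being that the classical matricial Wiener--L\'evy theorem on the circle must be replaced by its counterpart for $\mathcal W_+^{2\times2}$: a matrix $F\in\mathcal W_+^{n\times n}$ is invertible in $\mathcal W_+^{n\times n}$ if and only if $\det F(z)\neq0$ for \emph{every} $z$ in the closed unit disc $\overline{\mathbb B\cap\mathbb C_i}$ (not merely on $\partial\mathbb B$). For ${\rm (i)}\Rightarrow{\rm (ii)}$: if $f\star g=1$ with $g\in\mathcal W_{\mathbb H,+}$, then $\omega(f),\omega(g)\in\mathcal W_+^{2\times2}$ and, by \eqref{Malakoff-ligne-13}, $\omega(f)(z)\,\omega(g)(z)=I_2$ for all $z\in\overline{\mathbb B\cap\mathbb C_i}$; hence $\det\omega(f)(z)\neq0$ throughout the closed disc, giving ${\rm (ii)}$.

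For ${\rm (ii)}\Rightarrow{\rm (i)}$: since $\omega(f)\in\mathcal W_+^{2\times2}$ and $\det\omega(f)$ does not vanish on $\overline{\mathbb B\cap\mathbb C_i}$, the $\mathcal W_+$-version of the Wiener--L\'evy theorem yields $G\in\mathcal W_+^{2\times2}$ with $\omega(f)G=I_2$. Using the $2\times2$ inverse formula together with the fact, already recorded in the proof of Theorem \ref{wiener}, that $\det\omega(f)(z)=\sum_{u}z^u r_u$ has real coefficients $r_u$ (so that $\overline{\det\omega(f)(\bar z)}=\det\omega(f)(z)$), one checks that $G$ has the symmetric form $\begin{pmatrix}c(z)&d(z)\\-\overline{d(\bar z)}&\overline{c(\bar z)}\end{pmatrix}$ with $c,d\in\mathcal W_+$. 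Because $c$ and $d$ are in $\mathcal W_+$, their coefficients vanish for negative indices, so $g(p)=\sum_{n\ge0}p^n(c_n+d_nj)$ belongs to $\mathcal W_{\mathbb H,+}$; and, exactly as in the proof of Theorem \ref{wiener}, $\omega(f)\omega(g)=I_2$ on the circle forces $\chi\!\big(\sum_{n+m=u}f_ng_m\big)=\delta_{u,0}I_2$, hence $f\star g=1$ on $\partial\mathbb B$ and therefore everywhere.

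For ${\rm (ii)}\Leftrightarrow{\rm (iii)}$: recall from the proof of Theorem \ref{wiener} that $\det\omega(f)(z)=(f\star f^c)_{|\mathbb C_i}(z)$, that $f\star f^c$ has real coefficients (so its zero set is a union of real points and whole spheres), and that, via \eqref{pointwise} and the one-to-one correspondence between the zeros of $f$ and of $f^c$ on each sphere, $f\star f^c$ vanishes at a point of $\overline{\mathbb B}$ if and only if $f$ does (note $|q^{-1}p_0q|=|p_0|$, so the companion zero stays in $\overline{\mathbb B}$). Since every sphere meeting $\overline{\mathbb B}$ intersects $\overline{\mathbb B\cap\mathbb C_i}$ and every real point of $\overline{\mathbb B}$ already lies in $\mathbb C_i$, the zero set of $f\star f^c$ meets $\overline{\mathbb B}$ precisely when it meets $\overline{\mathbb B\cap\mathbb C_i}$. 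Hence $\det\omega(f)$ vanishes somewhere on $\overline{\mathbb B\cap\mathbb C_i}$ iff $f$ vanishes somewhere on $\overline{\mathbb B}$, which is exactly the equivalence of the negations of ${\rm (ii)}$ and ${\rm (iii)}$.

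\emph{Main obstacle.} The only genuinely new ingredient relative to Theorem \ref{wiener} is invoking the correct $\mathcal W_+$ form of the Wiener--L\'evy theorem (non-vanishing of the determinant on the full closed disc, not just the circle) and verifying that the inverse $G$ it produces retains the $\chi$-type symmetry, so that it arises from an element of $\mathcal W_{\mathbb H,+}$ rather than merely of $\mathcal W_{\mathbb H}$; once this is settled, the remainder is a transcription of the earlier argument with $\partial\mathbb B$ replaced by $\overline{\mathbb B}$.
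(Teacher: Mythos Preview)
Your proposal is correct and follows essentially the same approach as the paper: invoke the $\mathcal W_+^{2\times2}$ version of the Wiener--L\'evy theorem for ${\rm (ii)}\Rightarrow{\rm (i)}$, check that the inverse $G$ retains the $\omega$-type symmetry so that it comes from an element of $\mathcal W_{\mathbb H,+}$, and otherwise transcribe the proof of Theorem~\ref{wiener} with $\partial\mathbb B$ replaced by $\overline{\mathbb B}$. In fact you supply more detail than the paper, which only spells out ${\rm (ii)}\Rightarrow{\rm (i)}$ and leaves the remaining implications to the reader; your explicit remarks that $|q^{-1}p_0q|=|p_0|$ and that every sphere meeting $\overline{\mathbb B}$ meets $\overline{\mathbb B\cap\mathbb C_i}$ are exactly the small adaptations needed to carry the ${\rm (ii)}\Leftrightarrow{\rm (iii)}$ argument from the boundary to the full closed ball.
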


\begin{proof}
The proof is very similar to the proof of Theorem \ref{wiener}. We will only provide details for the implication ${\rm (ii)} \Longrightarrow {\rm (i)}$. We begin by noting that $\omega(f) \in \mathcal{W}_+^{2 \times 2}$. By the result for invertiblity of matrix-valued
functions in the Wiener algebra $\mathcal{W}_+^{2 \times 2}$ (see, e.g., \cite{ggk}), the condition $\det ( \omega(f) )(z) \neq 0$ for $z \in \overline{ \mathbb{B} \cap \mathbb{C} }$ implies that $\omega(f)$ is invertible in $\mathcal{W}_+^{2 \times 2}$. Let $G \in \mathcal{W}_+^{2 \times 2}$ be so that $\omega(f) G = I_2$ on $\overline{\mathbb{B} \cap \mathbb{C} }$. Similar to the proof of Theorem \ref{wiener} we get that
$$G(z) = \begin{pmatrix} c(z) & d(z) \\ -\overline{ d(\bar{z}) } & \overline{ c(z) } \end{pmatrix},$$
where $c(z) = \overline{ a(\bar{z}) } \det (\omega(f))(z)$ and $d(z) = -b(z) \det (\omega(f))(z)$. Thus, we may write $c(z) = \sum_{n=0}^{\infty} z^n c_n$ and $d(z) = \sum_{n=0}^{\infty} z^n d_n$. If we set $g(z) = \sum_{n=0}^{\infty} z^n (c_n + d_n j)$, then $g \in \mathcal{W}_{\mathbb{H}, +}$. The rest of the argument is as in the proof of Theorem \ref{wiener}.
\end{proof}

\begin{e-definition}
We say that $f\in\mathcal W_{\mathbb H}$ is strictly positive if
\[
\omega(f)(e^{it})>0,\quad  t\in[0,2\pi).
\]
\end{e-definition}

\begin{theorem}
$f\in\mathcal W_{\mathbb H}$ is strictly positive if and only if it can be written as $f=f_+\star f_+^c$, where $f_+$ is an invertible element of $\mathcal W_{\mathbb H,+}$.
\end{theorem}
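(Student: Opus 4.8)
The plan is to transfer the whole question, via the multiplicative map $\omega$, into the classical matrix Wiener algebra $\mathcal W^{2\times 2}$ and there invoke the Wiener--Fej\'er--Riesz spectral factorization. The only ingredient needed beyond \eqref{Malakoff-ligne-13} is the behaviour of $\omega$ under the involution $f\mapsto f^c$: since $\chi(\overline q)=\chi(q)^{*}$ for every quaternion $q$, the same bookkeeping that proves \eqref{Malakoff-ligne-13} yields at once
\[
(\omega(f^c))(e^{it})=\big((\omega(f))(e^{it})\big)^{*},\qquad t\in[0,2\pi),
\]
so that $\omega$ carries the $\star$-product to the matrix product and $f\mapsto f^c$ to $M\mapsto M^{*}$. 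In particular ``$f$ strictly positive'' means exactly that the self-adjoint element $\omega(f)\in\mathcal W^{2\times 2}$ is positive definite at every point of the unit circle.

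\emph{Sufficiency.} Suppose $f=f_+\star f_+^c$ with $f_+$ invertible in $\mathcal W_{\mathbb H,+}$. Applying $\omega$ to $f_+\star f_+^{-1}=1$ and to $f_+^{-1}\star f_+=1$ shows that $\omega(f_+)$ is invertible in $\mathcal W_+^{2\times 2}$, and by \eqref{Malakoff-ligne-13} together with the identity above
\[
(\omega(f))(e^{it})=(\omega(f_+))(e^{it})\,(\omega(f_+^c))(e^{it})=(\omega(f_+))(e^{it})\,\big((\omega(f_+))(e^{it})\big)^{*},\qquad t\in[0,2\pi).
\]
Since $(\omega(f_+))(e^{it})$ is an invertible $2\times 2$ matrix for each $t$, the right-hand side is strictly positive for each $t$; hence $f$ is strictly positive.

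\emph{Necessity.} Let $f$ be strictly positive. Then $\omega(f)\in\mathcal W^{2\times 2}$ is self-adjoint with $(\omega(f))(e^{it})>0$ for all $t$, so by the classical matricial Wiener spectral factorization theorem (see, e.g., \cite{ggk}) there is $L\in\mathcal W_+^{2\times 2}$, invertible in $\mathcal W_+^{2\times 2}$, with $\omega(f)=LL^{*}$, and $L$ is unique up to replacement by $LU$ with $U\in\mathbb C^{2\times 2}$ constant unitary. The key structural fact is that the image of $\omega$ is precisely the set of $M\in\mathcal W^{2\times 2}$ each of whose Fourier coefficients satisfies $M_n=\mathcal J\,\overline{M_n}\,\mathcal J^{-1}$ with $\mathcal J=\begin{pmatrix}0&1\\-1&0\end{pmatrix}$, i.e.\ has the block form \eqref{omegaij}; this set is stable under matrix multiplication and under $M\mapsto M^{*}$. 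Applying the operation $M(z)\mapsto\mathcal J\,\overline{M(\overline z)}\,\mathcal J^{-1}$ to the factorization $\omega(f)=LL^{*}$ (which fixes $\omega(f)$, since $\omega(f)$ lies in this set) exhibits a second spectral factor, whence by the uniqueness statement one modifies $L$ by a constant unitary so that $L$ itself lies in the image of $\omega$, say $L=\omega(f_+)$ with $f_+\in\mathcal W_{\mathbb H,+}$. Then $f_+$ is invertible in $\mathcal W_{\mathbb H,+}$ (by the invertibility criterion of the preceding theorem, as $\det\omega(f_+)\neq0$ on $\overline{\mathbb B\cap\mathbb C_i}$), $L^{*}=\omega(f_+^c)$, and $\omega(f)=\omega(f_+)\omega(f_+^c)=\omega(f_+\star f_+^c)$; since $\omega$ is injective, $f=f_+\star f_+^c$.

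The step I expect to be the main obstacle is this last point in the necessity argument: spectral factorization in the Wiener algebra is unique only up to a constant unitary right factor, so one has to verify that this freedom is enough to land the factor inside the $\mathcal J$-structured matrices --- concretely, that the constant unitary $V$ relating $L$ to $\mathcal J\,\overline{L(\overline z)}\,\mathcal J^{-1}$ (which one checks satisfies $\mathcal J\,\overline V\,\mathcal J^{-1}=V^{*}$) admits a unitary ``square root'' $W$ with $\mathcal J\,\overline W\,\mathcal J^{-1}=V^{*}W$. Everything else is the classical matricial Wiener theory together with routine bookkeeping with the block form \eqref{omegaij}.
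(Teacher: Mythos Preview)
Your overall strategy coincides with the paper's: apply $\omega$, invoke the matricial spectral factorization $\omega(f)=AA^*$ with $A$ outer in $\mathcal W_+^{2\times 2}$, exploit the involution $M(e^{it})\mapsto J_1\overline{M(e^{-it})}J_1^{*}$ (your $\mathcal J$ is the paper's $J_1$) together with uniqueness up to a right unitary constant, and then argue that the factor can be chosen in the range of $\omega$. On the obstacle you flag, the paper does not hunt for an abstract square root of the unitary: it simply normalizes so that $A(1)>0$, replacing $A(z)$ by $A(z)A(1)^*(A(1)A(1)^*)^{-1/2}$. Evaluating $A(e^{it})=J_1\overline{A(e^{-it})}J_1^{*}U$ at $t=0$ then gives $U=\big(J_1\overline{A(1)}J_1^{*}\big)^{-1}A(1)$, which is a product of two positive definite matrices (hence has positive real eigenvalues) and is simultaneously unitary (eigenvalues of modulus one); therefore $U=I_2$. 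This disposes of your concern directly.

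There is, however, a genuine error in the identity you invoke at the outset. With the paper's definition $f^c(p)=\sum_u p^u\overline{f_u}$ one computes $\omega(f^c)(e^{it})=\sum_u e^{iut}\chi(f_u)^{*}$, whereas $\big(\omega(f)(e^{it})\big)^{*}=\sum_u e^{-iut}\chi(f_u)^{*}$; the Fourier index is not reflected, so the two do not agree. Consequently your sufficiency argument, and the identification $L^{*}=\omega(f_+^c)$ at the end of necessity, are not justified as written. (This in fact points to an imprecision in the theorem's wording: with this $f^c$ the product $f_+\star f_+^c$ lies in $\mathcal W_{\mathbb H,+}$, so $\omega(f_+\star f_+^c)\in\mathcal W_+^{2\times 2}$ and is generically not Hermitian on the circle---take $f_+(p)=1+pa$ with $0<a<1$. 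The paper's own proof treats only the necessity direction and stops at $A=\omega(a_+)$ without spelling out the last identification either; the statement becomes correct if the involution also reflects the Fourier index.)
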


\begin{proof}
By the classical Wiener-Hopf theory, there is an element
$A\in\mathcal W_+^{2\times 2}$, unique up to a right
multiplicative unitary constant, such that $A^{-1}\in\mathcal
W_+^{2\times 2}$ and
\[
\omega(f)(e^{it})=A(e^{it})A(e^{it})^*.
\]
Let
\begin{equation}
\label{elizabeth}
J_1=\begin{pmatrix}0&1\\-1&0\end{pmatrix}.
\end{equation}
A simple computation shows that
\[
\begin{pmatrix}0&1\\-1&0\end{pmatrix}
\begin{pmatrix}\overline{a(e^{-it}})& \overline{b(e^{-it})}\\-b(e^{it})&a(e^{it})\end{pmatrix}
\begin{pmatrix}0&-1\\1&0\end{pmatrix}=
\begin{pmatrix} a(e^{it}& b(e^{it})\\-\overline{b(e^{-it})}& \overline{a(e^{-it})}\end{pmatrix}
\]
from which we deduce
\[
J_1\overline{\omega(f)(e^{-it})}J_1^*=\omega(f)(e^{it}).
\]
The function
\[
B(e^{it})=J_1\overline{A(e^{-it})}J_1^*
\]
is an invertible element of $\mathcal W_+^{2\times 2}$.
Thus, for a unitary constant $U$ we have
\begin{equation}
\label{clothilde}
A(e^{it})=J_1\overline{A(e^{-it})}J_1^*U.
\end{equation}
We have that $A(1)$ is invertible since $A(e^{it})$ is invertible in
$\mathcal W_+^{2\times 2}$. By replacing $A(z)$ by $A(z)A(1)^*(A(1)A(1)^*)^{-1/2}$ we can always choose $A(1)>0$, i.e., $A(1)$ is a positive definite matrix.
This forces $U=I_2$. Indeed, let
$A(1)=\begin{pmatrix}a&\overline{c}\\c&d\end{pmatrix}$. Then
\[
A(1)=J_1\overline{A(1)}J_1^*
\]
and \eqref{clothilde} leads to $U=I_2$.
With $A(z)=\sum_{n=0}^\infty z^nA_n$ we have
\[
A_n=J_1\overline{A_n}J_1^*,
\]
and so
\[
A(e^{it})=\omega(a_+)(e^{it})
\]
where $a_+(p)=\sum_{n=0}^\infty p^na_n$ and $\chi(a_n)=A_n$.
\end{proof}

\section{Toeplitz operators}
\setcounter{equation}{0}
Let $\ell_2(\mathbb{H})$ denote the Hilbert space of all $\mathbb{H}$-valued sequences $a = (a_n)_{n=0}^{\infty}$ so that
$$\| a \|_{\ell_2}^2 = \sum_{n=0}^{\infty} | a_n |^2 < \infty.$$
The inner product for $a, b \in \ell_2(\mathbb{H})$ is given by
$$\langle a, b \rangle_{\ell_2} = \sum_{n=0}^{\infty} \bar{b}_n a_n.$$
Let $L_2(\partial \mathbb{B})$ denote the Hilbert space of all functions $f(p) = \sum_{n=-\infty}^{\infty} p^n f_n$ on $\partial \mathbb{B}$ so that
\begin{align}
\| f \|^2_{ L_2 (\partial \mathbb{B} ) } =& \; \frac{1}{2\pi} \int_0^{2\pi} |f(e^{i \theta}) |^2 d\theta \label{first} \\
=& \; \sum_{n=-\infty}^{\infty} | f_n |^2 < \infty.  \label{independent}
\end{align}
As in the complex case, elements of $L_2(\partial \mathbb{B})$ are defined almost everywhere on $\partial\mathbb B$.
It is worth noting that \eqref{independent} does not depend on $i \in \mathbb{S}$. Thus, any $i \in \mathbb{S}$ can be used in \eqref{first}. The inner product for $L_2(\partial \mathbb{B})$ is given by
$$
\langle f, g \rangle_{\partial \mathbb{B} } = \frac{1}{2\pi} \int_0^{2\pi} \overline{g(e^{i \theta})} f(e^{i \theta}) d\theta = \sum_{u =-\infty}^{\infty} \bar{g}_{u} f_u.
$$
We let $H_2( \partial \mathbb{B} )$ and $H_2( \partial \mathbb{B})^{\perp}$ denote the set of functions $f \in L_2(\partial \mathbb{B})$ so that $f_n = 0$ for $n < 0$ and $f_n = 0$ for $n \geq 0$, respectively.

Let $\mathfrak{p}$  and $\mathfrak{q}$ denote the orthogonal projections of $L_2(\partial \mathbb B)$ onto $H_2(\partial \mathbb{B})$ and $L_2(\partial \mathbb B)$ onto $( H_2(\partial \mathbb{B}))^{\perp}$, respectively. Note that $(\mathfrak{p} + \mathfrak{q})f = f$ for $f \in L_2(\partial \mathbb B)$. The operators $\mathfrak{p}$ and $\mathfrak{q}$ are given by the rules
$$(\mathfrak{p} f)(p) = \sum_{n=0}^{\infty} p^n f_n$$
and
$$(\mathfrak{q} f)(p) = \sum_{n=-\infty}^{-1} p^n f_n,$$
respectively,
where $f(p) = \sum_{n=-\infty}^{\infty} p^n f_n$. If $\varphi \in \mathcal{W}_{\mathbb{H}}$, then we let $M_{\varphi}$ denote the operator of multiplication on $\mathcal{W}_{\mathbb H}$ given by $(M_{\varphi} f)(p) = (\varphi \star f)(p)$. If $\varphi \in \mathcal
W_{\mathbb H}$, then we define the Toeplitz operator $T_{\varphi}$ by the rule
\begin{equation}
\label{eq:Apr27ub1}
T_{\varphi} f = \mathfrak{p} M_{\varphi} f \quad {\rm for} \quad f \in H_2( \partial \mathbb B).
\end{equation}
The Toeplitz operator $T_{\varphi}$ can also be defined as an operator from $\ell_2(\mathbb{H}) \to \ell_2(\mathbb{H})$ given by
\begin{equation}
\label{eq:Jul2u1}
T_{\varphi} \xi = \begin{pmatrix} \varphi_0 & \cdots & \varphi_{-n} & \cdots \\ \vdots & \ddots & \vdots & \ddots \\
\varphi_n & \cdots & \varphi_0 & \cdots \\ \vdots & \ddots & \vdots & \ddots \end{pmatrix} \begin{pmatrix} \xi_0 \\ \vdots \\ \xi_n \\ \vdots \end{pmatrix} \quad {\rm for} \quad \xi = \begin{pmatrix} \xi_0 \\ \vdots \\ \xi_n \\ \vdots \end{pmatrix} \in \ell_2(\mathbb{H}).
\end{equation}

Theorem \ref{theorem:Apr27uz1} and Corollary \ref{corcor} are natural analogs of results established for Toeplitz operators in the classical setting (see \cite{brownhalmos}).

\begin{theorem}
\label{theorem:Apr27uz1}
If $\varphi,\psi \in \mathcal{W}_{ \mathbb H}$, then $T_{\varphi} T_{\psi}$ is Toeplitz if and only if $\varphi \in
\mathcal{W}_{\mathbb H,-}$ or $\psi \in \mathcal{W}_{\mathbb H,+}$. In this case, $T_{\varphi} T_{\psi} = T_{ \varphi \star \psi}$.
\end{theorem}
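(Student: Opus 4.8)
The plan is to interpret ``$T_\varphi T_\psi$ is Toeplitz'' as the assertion that the matrix of $T_\varphi T_\psi$ in the sense of \eqref{eq:Jul2u1} is constant along diagonals, i.e.\ invariant under the simultaneous index shift $(m,n)\mapsto(m+1,n+1)$, and then to reduce the whole statement to a short bookkeeping computation, following the classical Brown--Halmos argument \cite{brownhalmos}. By \eqref{eq:Jul2u1} the matrix of $T_\varphi$ has $(m,n)$-entry $\varphi_{m-n}$ for $m,n\ge 0$; since $\varphi,\psi\in\mathcal W_{\mathbb H}$ give bounded multiplication operators (with operator norm at most $\sum_u|\varphi_u|$, resp.\ $\sum_u|\psi_u|$), the composition $T_\varphi T_\psi$ is bounded and its matrix is the absolutely convergent product
\[
(T_\varphi T_\psi)_{mn}=\sum_{k=0}^{\infty}\varphi_{m-k}\psi_{k-n},\qquad m,n\ge 0 .
\]
The first step is the elementary re-indexing identity (substitute $k\mapsto k+1$)
\[
(T_\varphi T_\psi)_{m+1,n+1}=\varphi_{m+1}\psi_{-n-1}+(T_\varphi T_\psi)_{mn},\qquad m,n\ge 0,
\]
which already shows that $T_\varphi T_\psi$ is Toeplitz if and only if $\varphi_a\psi_{-b}=0$ for all integers $a,b\ge 1$.

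The second step invokes the only genuinely quaternionic ingredient, and it works in our favour: $\mathbb H$ is a division ring, hence has no zero divisors. Thus $\varphi_a\psi_{-b}=0$ for all $a,b\ge 1$ forces, for each such pair, $\varphi_a=0$ or $\psi_{-b}=0$; and if $\varphi_{a_0}\neq 0$ for some $a_0\ge 1$, then necessarily $\psi_{-b}=0$ for every $b\ge 1$. Hence the vanishing of all these products is equivalent to $\varphi\in\mathcal W_{\mathbb H,-}$ or $\psi\in\mathcal W_{\mathbb H,+}$, the reverse implication being immediate from the displayed identity. This is exactly the claimed equivalence.

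For the formula, assume $\varphi\in\mathcal W_{\mathbb H,-}$ or $\psi\in\mathcal W_{\mathbb H,+}$ and compare $(T_\varphi T_\psi)_{mn}=\sum_{k\ge 0}\varphi_{m-k}\psi_{k-n}$ with $(T_{\varphi\star\psi})_{mn}=(\varphi\star\psi)_{m-n}=\sum_{l\in\mathbb Z}\varphi_{m-l}\psi_{l-n}$: the difference is the tail $\sum_{l<0}\varphi_{m-l}\psi_{l-n}$, and for $m,n\ge 0$ each term there has $m-l\ge 1$ and $l-n\le -1$, so it vanishes under either hypothesis; therefore $T_\varphi T_\psi=T_{\varphi\star\psi}$. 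Equivalently, one can phrase this step with the projections of Section~3: since $\star$ is associative one has $M_\varphi M_\psi=M_{\varphi\star\psi}$, hence $T_{\varphi\star\psi}-T_\varphi T_\psi=\mathfrak p\,M_\varphi\,\mathfrak q\,M_\psi\,\mathfrak p$, and the right-hand side is $0$ because $M_\varphi$ leaves $(H_2(\partial\mathbb B))^{\perp}$ invariant when $\varphi\in\mathcal W_{\mathbb H,-}$, while $M_\psi$ leaves $H_2(\partial\mathbb B)$ invariant when $\psi\in\mathcal W_{\mathbb H,+}$.

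I do not expect a real obstacle: the proof is a faithful transcription of the complex case, and noncommutativity enters only through the absence of zero divisors in $\mathbb H$, which is precisely what upgrades ``$\varphi_a\psi_{-b}=0$ for all $a,b\ge 1$'' to the clean dichotomy (over a general coefficient ring one would only get a condition on products, not a splitting). The sole point needing a word of care is expository---fixing that ``Toeplitz'' means diagonal-constancy of \eqref{eq:Jul2u1}, with the existence of a bona fide symbol (namely $\varphi\star\psi\in\mathcal W_{\mathbb H}$) then appearing as a by-product---together with the routine remark that the product of the two infinite matrices does represent the composition, since both factors are bounded.
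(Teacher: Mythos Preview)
Your proof is correct and essentially follows the Brown--Halmos strategy the paper invokes, but you carry it out at the level of matrix entries rather than through the projection calculus the paper uses. The paper works with $\mathfrak p$, $\mathfrak q$ and $M_\varphi$ throughout, deriving the key relation $\mathfrak p\,M_\varphi\,\mathfrak q\,M_\psi\,f=0$ on $H_2(\partial\mathbb B)$ and then asserting (without detail) that this forces $\varphi\in\mathcal W_{\mathbb H,-}$ or $\psi\in\mathcal W_{\mathbb H,+}$. Your shift identity $(T_\varphi T_\psi)_{m+1,n+1}-(T_\varphi T_\psi)_{mn}=\varphi_{m+1}\psi_{-n-1}$ is the matrix incarnation of that same relation, and it has the advantage of isolating the obstruction term explicitly, so that the appeal to $\mathbb H$ having no zero divisors becomes transparent---this is precisely the step the paper suppresses. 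Conversely, the paper's coordinate-free formulation transfers verbatim to the continuous (Wiener--Hopf) setting of Section~5, which is why the paper can dispatch Theorem~\ref{thm:August31yyg1} in one line. Your observation about the meaning of ``Toeplitz'' is apt: the paper reads it as $T_g$ for some $g\in\mathcal W_{\mathbb H}$, which in particular implies diagonal-constancy of the matrix \eqref{eq:Jul2u1}, so your hypothesis is no stronger and the two readings coincide once the equivalence is established.
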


\begin{proof}
If $\psi \in \mathcal{W}_{\mathbb H,+}$, then
\begin{align*}
T_{\varphi} T_{\psi} f =& \; \mathfrak{p} M_{\varphi} \mathfrak{p} M_{\psi} f \quad {\rm for} \quad f \in H_2(\partial \mathbb B) \\
=& \; \mathfrak{p} M_{\varphi} M_{\psi} f \\
=& \; \mathfrak{p} M_{\varphi \star \psi} f \\
=& \; T_{\varphi \star \psi} f.
\end{align*}

If $\varphi \in \mathcal{W}_{\mathbb H,-}$, then
\begin{align*}
T_{\varphi} T_{\psi} f =& \; \mathfrak{p} M_{\varphi} \mathfrak{p} M_{\psi} f \quad {\rm for} \quad f \in H_2(\partial \mathbb B) \\
=& \; \mathfrak{p} M_{\varphi} (\mathfrak{p} + \mathfrak{q}) M_{\psi} f - \mathfrak{p} M_{\varphi} \mathfrak{q} M_{\psi} f \\
=& \; \mathfrak{p} M_{\varphi \star \psi} f - 0 \\
=& \; T_{\varphi \star \psi} f.
\end{align*}

Conversely, if $T_{\varphi} T_{\psi}$ is Toeplitz, then
\begin{equation}
\label{eq:Apr27nb1}
T_{\varphi} T_{\psi} = T_{\varphi \star \psi}.
\end{equation}
Indeed, if there exists $g \in \mathcal{W}_{\mathbb H}$ so that
$$\mathfrak{p} M_{\varphi} \mathfrak{p} M_{\psi} f = \mathfrak{p} M_g f \quad {\rm for} \quad f \in H_2(\partial \mathbb B),$$
then
$$\mathfrak{p} M_{\varphi} (\mathfrak{p} + \mathfrak{q} - \mathfrak{q}) M_{\psi} f = \mathfrak{p} M_g f,$$
whence
$$\mathfrak{p} M_{\varphi \star \psi - g } f = \mathfrak{p} M_{\varphi} \mathfrak{q} M_{\psi} f \quad {\rm for} \quad f \in H_2(\partial \mathbb B).$$
Thus $g = \varphi \star \psi$.
Finally, it follows from \eqref{eq:Apr27nb1} that
$$ \mathfrak{p} M_{\varphi \star \psi} f - \mathfrak{p} M_{\varphi} \mathfrak{q} M_{\psi} f = \mathfrak{p} M_{\varphi \star \psi} f \quad {\rm for} \quad f \in H_2(\partial \mathbb B)$$
i.e.
\begin{equation}
\label{eq:Apr27ur1}
\mathfrak{p} M_{\varphi} \mathfrak{q} M_{\psi} f = 0 \quad {\rm for} \quad f \in H_2(\partial \mathbb B).
\end{equation}
It follows from \eqref{eq:Apr27ur1} that $\varphi \in \mathcal{W}_{\mathbb H,-}$ or $\psi \in \mathcal{W}_{\mathbb H,+}$.
\end{proof}

\begin{corollary}
\label{corcor}
If $\varphi, \psi \in \mathcal{W}_{\mathbb{H}}$, then $T_{\varphi} T_{\psi} = 0$ if and only if $\varphi(p) = 0$ for all $p \in \partial \mathbb{B}$ or $\phi(p) = 0$ for all $p \in \partial \mathbb{B}$.
\end{corollary}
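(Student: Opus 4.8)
The plan is to reduce the operator identity to the vanishing of the $\star$-product $\varphi\star\psi$ and then to exploit the factorization $\omega(\varphi\star\psi)=\omega(\varphi)\,\omega(\psi)$ from \eqref{Malakoff-ligne-13}. One implication is immediate: if $\varphi(p)=0$ for every $p\in\partial\mathbb B$, then in particular $\varphi$ vanishes on $\partial\mathbb B\cap\mathbb C_i$, and since an element of $\mathcal W_{\mathbb H}$ is determined by its restriction to that circle (extract Fourier coefficients, noting that $e^{-ivt}$ multiplies $\varphi_v$ on the left and hence passes through the integral) we get $\varphi=0$ in $\mathcal W_{\mathbb H}$, so $T_\varphi=0$ and $T_\varphi T_\psi=0$; the same argument works with $\psi$.

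For the converse, I would first observe that $T_\varphi T_\psi=0=T_0$ is Toeplitz, so Theorem~\ref{theorem:Apr27uz1} applies and gives $\varphi\in\mathcal W_{\mathbb H,-}$ or $\psi\in\mathcal W_{\mathbb H,+}$, and in either case $T_\varphi T_\psi=T_{\varphi\star\psi}$. Reading off the matrix in \eqref{eq:Jul2u1}, $T_{\varphi\star\psi}=0$ forces every coefficient of $\varphi\star\psi$ to vanish, i.e. $\varphi\star\psi=0$. Applying $\omega$ and using \eqref{Malakoff-ligne-13} then yields $\omega(\varphi)(z)\,\omega(\psi)(z)=0$ for all $z\in\overline{\mathbb B\cap\mathbb C_i}$, in particular for $z=e^{it}$.

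Now suppose, say, $\psi\in\mathcal W_{\mathbb H,+}$ and $\psi\neq 0$; I claim $\varphi=0$. Writing $\psi_n=c_n+d_nj$, the functions $a(z)=\sum_{n\ge 0}z^nc_n$, $b(z)=\sum_{n\ge 0}z^nd_n$ and also $\overline{a(\bar z)}=\sum_{n\ge 0}z^n\overline{c_n}$, $\overline{b(\bar z)}=\sum_{n\ge 0}z^n\overline{d_n}$ all lie in $\mathcal W_+$, so $h(z):=(\det\omega(\psi))(z)=a(z)\overline{a(\bar z)}+b(z)\overline{b(\bar z)}$ is an element of $\mathcal W_+$, hence continuous on $\overline{\mathbb B\cap\mathbb C_i}$ and holomorphic inside. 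On the real segment $z=x\in(-1,1)$ one has $h(x)=|a(x)|^2+|b(x)|^2$, which is not identically zero because $\psi\neq 0$; thus $h\not\equiv 0$. Since a nonzero holomorphic function continuous up to the boundary cannot vanish on a boundary arc (reflect across the arc via $z\mapsto\overline{h(1/\bar z)}$ and apply the identity theorem), the set $\{t:h(e^{it})\neq 0\}$ is dense. On this dense set $\omega(\psi)(e^{it})$ is invertible, so $\omega(\varphi)(e^{it})=0$ there, and since $\omega(\varphi)\in\mathcal W^{2\times 2}$ is continuous on the circle we get $\omega(\varphi)(e^{it})\equiv 0$; hence all Fourier coefficients $\chi(\varphi_u)$ vanish, and injectivity of $\chi$ gives $\varphi=0$. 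The case $\varphi\in\mathcal W_{\mathbb H,-}$, $\varphi\neq 0$ is symmetric: $\det\omega(\varphi)\in\mathcal W_-$ is continuous on $\{|z|\ge 1\}$, holomorphic on $\{|z|>1\}$, and not identically zero (evaluate on the real axis outside the disc), so it cannot vanish on an arc, $\omega(\psi)$ then vanishes on a dense subset of the circle, and continuity forces $\psi=0$. In all cases $\varphi=0$ or $\psi=0$, i.e. $\varphi(p)=0$ for all $p\in\partial\mathbb B$ or $\psi(p)=0$ for all $p\in\partial\mathbb B$.

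The delicate point is this last step. Since $\mathcal W_{\mathbb H}$ has zero divisors, the relation $\omega(\varphi)(z)\,\omega(\psi)(z)=0$ on the unit circle does not by itself force one of the factors to vanish; the only leverage comes from Theorem~\ref{theorem:Apr27uz1}, which guarantees that the distinguished factor ($\psi$ in $\mathcal W_{\mathbb H,+}$, or $\varphi$ in $\mathcal W_{\mathbb H,-}$) is one-sided, so that its $\omega$-determinant is a genuinely one-sided scalar function and therefore has a zero set with empty interior. Verifying that $\det\omega$ of a nonzero one-sided element is again a nonzero one-sided function, and that such a function cannot vanish on an arc, is the crux; once that is in place, continuity of $\omega$ of the remaining factor and injectivity of $\chi$ finish the argument.
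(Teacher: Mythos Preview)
Your argument is correct and its first half coincides with the paper's: invoke Theorem~\ref{theorem:Apr27uz1} to obtain $T_\varphi T_\psi=T_{\varphi\star\psi}$ and hence $\varphi\star\psi=0$. At that point the paper simply asserts ``But then we must have that $\varphi(p)=0$ for all $p\in\partial\mathbb B$ or $\psi(p)=0$ for all $p\in\partial\mathbb B$'' with no justification. You supply one, and you are right that it is needed: $\mathcal W_{\mathbb H}$ does have zero divisors (two real-valued smooth bumps supported on disjoint arcs of the circle have real, absolutely summable Fourier coefficients, and their $\star$-product is the pointwise product, which vanishes), so the implication fails without the extra information from Theorem~\ref{theorem:Apr27uz1} that one factor lies in $\mathcal W_{\mathbb H,+}$ or $\mathcal W_{\mathbb H,-}$. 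Your use of that one-sidedness---making $h=\det\omega(\psi)$ holomorphic in the disc, checking $h(x)=|a(x)|^2+|b(x)|^2\not\equiv 0$ on $(-1,1)$, and invoking boundary uniqueness to force the zero set of $h$ on the circle to have empty interior, whence $\omega(\varphi)$ vanishes on a dense set and thus identically---is exactly what is required to close the gap. So your proof follows the paper's strategy but is the only one of the two that is actually complete.
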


\begin{proof}
The converse statement is obvious. If $T_{\varphi} T_{\psi} = 0 = T_0$, then we may use \eqref{theorem:Apr27uz1} to deduce that $(\varphi \star \psi)(p) = 0$ for all $p \in \partial \mathbb{B}$. But then we must have that $\varphi(p) = 0$ for all $p \in \partial \mathbb{B}$ or $\psi(p) = 0$ for all $p \in \partial \mathbb{B}$.

\end{proof}

\section{The Wiener algebra in the continuous case}
\setcounter{equation}{0}
Let $L_1(\mathbb{R}, \mathbb{H})$ denote the space of functions $f: \mathbb{R} \to \mathbb{H}$ so that
$$\| f \|_{L_1(\mathbb{R}, \mathbb{H})} = \int_{\mathbb{R}} | f(u) | du < \infty.$$
We denote by $\mathcal W(\mathbb R, \mathbb H)$ the set of functions of the form
\begin{equation}\label{Effe}
F_1(p)=c_1+\int_{\mathbb R}e^{pu}f_1(u)\, du, \qquad p\in\mathbb R\mathbb S=\left\{it: t\in\mathbb R\,\,\text{and}\,\, i\in\mathbb S
\right\},
\end{equation}
where $f_1\in L_1(\mathbb R,\mathbb H)$ and $c_1\in\mathbb H$.
The set $\mathcal W (\mathbb R, \mathbb H)$ can be endowed with the multiplication
$$
(F_1\star F_2)(p)=c_1c_2+\int_{\mathbb R}e^{up}c_1f_2(u)\, du + \int_{\mathbb R}e^{up}f_1(u)c_2\, du
+\int_{\mathbb R}e^{up} (f_1\circ f_2)(u)\, du
$$
where $f_1\circ f_2$ denotes the convolution of $f_1$ and $f_2$ and
$$
F_2(p)=c_2+\int_{\mathbb R}e^{up}f_2(u)\, du.
$$
If $F_1$ is as above, we also define
$$
\| F_1\|= |c_1|+\|f_1\|_{L_1(\mathbb{R}, \mathbb{H})}.
$$
We have:
\begin{proposition}
$\mathcal W(\mathbb R, \mathbb H)$ endowed with the $\star$-multiplication is a real Banach algebra.
\end{proposition}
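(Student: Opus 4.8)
The plan is to realize $\mathcal W(\mathbb R,\mathbb H)$ as the direct sum $\mathbb H\oplus L_1(\mathbb R,\mathbb H)$ equipped with a suitable $\mathbb R$-bilinear product, and then to verify the Banach-algebra axioms on that model. The first step is to check that the representation of an element $F_1$ by the pair $(c_1,f_1)$ is \emph{unique}, so that $\|F_1\|=|c_1|+\|f_1\|_{L_1}$ is well defined. For this I would assume $c+\int_{\mathbb R}e^{itu}f(u)\,du=0$ for all $i\in\mathbb S$ and $t\in\mathbb R$, fix orthogonal $i,j\in\mathbb S$, and decompose $c=a+bj$ and $f=g+hj$ with $a,b\in\mathbb C_i$ and $g,h\in L_1(\mathbb R,\mathbb C_i)$. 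Since $e^{itu}$ commutes with $\mathbb C_i$, the identity splits into $a+\widehat g(t)=0$ and $b+\widehat h(t)=0$ for all $t$, where $\widehat g(t)=\int_{\mathbb R}e^{itu}g(u)\,du$ and similarly for $\widehat h$. Riemann--Lebesgue forces $a=b=0$, and then the $L_1$ uniqueness theorem for the Fourier transform gives $g=h=0$ a.e., hence $c=0$ and $f=0$ a.e. Thus $F_1\mapsto(c_1,f_1)$ is a well-defined $\mathbb R$-linear bijection of $\mathcal W(\mathbb R,\mathbb H)$ onto $\mathbb H\oplus L_1(\mathbb R,\mathbb H)$ under which $\|F_1\|$ equals the sum norm $|c_1|+\|f_1\|_{L_1}$.

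Completeness is then immediate: $\mathbb H$ is finite-dimensional hence complete, $L_1(\mathbb R,\mathbb H)$ is complete, and an $\ell^1$-type direct sum of complete normed spaces is complete; so $(\mathcal W(\mathbb R,\mathbb H),\|\cdot\|)$ is a real Banach space.

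Next I would check that $\star$ is a well-defined $\mathbb R$-bilinear operation on the set. From the displayed formula, $F_1\star F_2$ has constant part $c_1c_2\in\mathbb H$ and density $u\mapsto c_1f_2(u)+f_1(u)c_2+(f_1\circ f_2)(u)$; the first two terms are plainly in $L_1(\mathbb R,\mathbb H)$, while $f_1\circ f_2\in L_1(\mathbb R,\mathbb H)$ with $\|f_1\circ f_2\|_{L_1}\le\|f_1\|_{L_1}\|f_2\|_{L_1}$, the quaternion-valued analogue of Young's inequality, which follows from Tonelli's theorem and the equality $|ab|=|a||b|$ in $\mathbb H$ (in particular the convolution integral converges for a.e.\ $u$). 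Bilinearity over $\mathbb R$ and distributivity over $+$ are clear, since real scalars commute with everything and pass through the integrals. Submultiplicativity of the norm follows from
\[
\|F_1\star F_2\|=|c_1c_2|+\|c_1f_2+f_1c_2+f_1\circ f_2\|_{L_1}\le |c_1||c_2|+|c_1|\,\|f_2\|_{L_1}+|c_2|\,\|f_1\|_{L_1}+\|f_1\|_{L_1}\|f_2\|_{L_1}=\|F_1\|\,\|F_2\|,
\]
and the constant function $1$ (that is, $c=1$, $f=0$) is a unit of norm $1$.

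The last axiom, associativity, is where the actual work lies, and I expect it to be the main (though entirely routine) obstacle. On constant parts it reduces to associativity in $\mathbb H$: $(c_1c_2)c_3=c_1(c_2c_3)$. On densities, expanding $(F_1\star F_2)\star F_3$ and $F_1\star(F_2\star F_3)$ produces in each case the same seven terms: the ``linear'' ones $c_1c_2f_3$, $c_1f_2c_3$, $f_1c_2c_3$ match outright; the ``mixed'' ones match once one notes that a constant quaternion factors through a convolution on the correct side, e.g.\ $(c_1f_2)\circ f_3=c_1(f_2\circ f_3)$, $(f_1c_2)\circ f_3=f_1\circ(c_2f_3)$, $(f_1\circ f_2)c_3=f_1\circ(f_2c_3)$; and the top term matches by associativity of convolution $(f_1\circ f_2)\circ f_3=f_1\circ(f_2\circ f_3)$, which follows from Fubini's theorem, the interchange of the order of integration being legitimate because $\int_{\mathbb R}\int_{\mathbb R}\int_{\mathbb R}|f_1(u-v)|\,|f_2(v-w)|\,|f_3(w)|\,dw\,dv\,du=\|f_1\|_{L_1}\|f_2\|_{L_1}\|f_3\|_{L_1}<\infty$. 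Assembling these facts gives all the axioms, so $\mathcal W(\mathbb R,\mathbb H)$ is a real Banach algebra. The only genuinely non-formal ingredient is the uniqueness of representation in the first step; everything else is bookkeeping, the one point demanding care being to keep left and right multiplications straight in the noncommutative convolution identities.
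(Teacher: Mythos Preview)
Your proof is correct and follows the same approach as the paper, which simply records the submultiplicativity estimate $\|F_1\star F_2\|\le\|F_1\|\,\|F_2\|$ and remarks that completeness carries over from the complex case. You are considerably more thorough, in particular checking uniqueness of the representation $(c_1,f_1)$ and associativity of $\star$, both of which the paper leaves implicit.
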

\begin{proof}
The proof follows from
\[
\begin{split}
\| F_1\star F_2\| &\leq  |c_1c_2|+ \| f_1c_2+c_1f_2+f_1\circ f_2\|_{L_1(\mathbb{R}, \mathbb{H})} \\
&\leq |c_1c_2|+ \| f_1\|_{L_1(\mathbb{R}, \mathbb{H})} |c_2|+|c_1| \| f_2\|_{L_1}+\|f_1\circ f_2\|_{L_1(\mathbb{R}, \mathbb{H})}\\
&=(|c_1| + \| f_1\|_{L_1(\mathbb{R}, \mathbb{H})}) (|c_2| + \| f_2\|_{L_1(\mathbb{R}, \mathbb{H})} )=\| F_1\|+\| F_2\|.
\end{split}
\]
The completeness follows just as in the complex case.
\end{proof}
We now define a map $\omega=\omega_{i,j}$ depending on the choice of an imaginary unit $i\in\mathbb S$ and of a $j\in\mathbb S$
orthogonal to $i$. In what follows, we will omit to specify the imaginary units $i$ and $j$, but we keep the imaginary unit while writing the variables to make it explicit that we work on a specific complex plane (this leads to a different notation than in the classical case, where
the imaginary unit is unique and can be understood).
Then, by setting $p=it$, we define
\[
\omega(F_1)(it)=\chi(c_1)+\int_{\mathbb R} e^{itu}\, \chi(f_1(u))\, du,
\]
where $\chi$ is defined as in \eqref{chi-f}, that is if $f_1(u)=a(u)+b(u)j$ then
$$
\chi(f_1(u))=\begin{pmatrix}a(u)&b(u)\\
-\overline{b(u)}&\overline{a(u)}\end{pmatrix}.
$$
It is immediate that $a,b\in L_1(\mathbb R,\mathbb C)$ and so $\omega(F_1)(it)$ belongs to the continuous Wiener algebra $\mathcal W^{2\times 2}(\mathbb R)$ with values in $\mathbb C^{2\times 2}$.
\begin{lemma}\label{lemmastar}
Let $F_1,F_2\in\mathcal W (\mathbb R, \mathbb H)$. Then
$$
(\omega(F_1\star F_2))(it)=(\omega(F_1))(it)(\omega(F_2))(it), \quad t \in \mathbb{R}.
$$
\end{lemma}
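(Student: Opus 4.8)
The plan is to reduce everything to the single fact that $\chi\colon\mathbb H\to\mathbb C^{2\times 2}$ is a homomorphism of real algebras, exactly as in the discrete identity \eqref{Malakoff-ligne-13}. Write $F_k(p)=c_k+\int_{\mathbb R}e^{up}f_k(u)\,du$ for $k=1,2$. First I would record the action of $\omega$: by definition $\omega(F_k)(it)=\chi(c_k)+\int_{\mathbb R}e^{itu}\chi(f_k(u))\,du$, and since $\chi$ is $\mathbb R$-linear and bounded on the finite-dimensional space $\mathbb H$, it commutes with the (Bochner) integral; in particular the entries of $\chi\circ f_k$ lie in $L_1(\mathbb R,\mathbb C)$, so $\omega(F_k)(it)\in\mathcal W^{2\times 2}(\mathbb R)$ as already observed in the text.

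Next I would compute the two sides separately and match them term by term. On the one hand, from the definition of the $\star$-product the constant term of $F_1\star F_2$ is $c_1c_2$ and its $L_1$-density is $c_1f_2(u)+f_1(u)c_2+(f_1\circ f_2)(u)$; applying $\omega$ and using $\mathbb R$-linearity and multiplicativity of $\chi$ gives
\[
\omega(F_1\star F_2)(it)=\chi(c_1)\chi(c_2)+\int_{\mathbb R}e^{itu}\Big(\chi(c_1)\chi(f_2(u))+\chi(f_1(u))\chi(c_2)+\chi\big((f_1\circ f_2)(u)\big)\Big)\,du.
\]
On the other hand, expanding the product $\omega(F_1)(it)\,\omega(F_2)(it)$ of two elements of $\mathcal W^{2\times 2}(\mathbb R)$ produces four terms: $\chi(c_1)\chi(c_2)$; the two mixed terms $\int_{\mathbb R}e^{itu}\chi(c_1)\chi(f_2(u))\,du$ and $\int_{\mathbb R}e^{itu}\chi(f_1(u))\chi(c_2)\,du$; and the double integral $\int_{\mathbb R}\!\int_{\mathbb R}e^{it(u+v)}\chi(f_1(u))\chi(f_2(v))\,du\,dv$. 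The first three coincide with the corresponding terms above, so the only point to verify is that the double integral equals $\int_{\mathbb R}e^{itu}\chi\big((f_1\circ f_2)(u)\big)\,du$.

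For that identity I would invoke Young's inequality, $\|f_1\circ f_2\|_{L_1}\le\|f_1\|_{L_1}\|f_2\|_{L_1}$ (already implicit in the Banach algebra proposition), together with the boundedness of $\chi$, to justify Fubini's theorem; then the substitution $w=u+v$ gives
\[
\int_{\mathbb R}\!\int_{\mathbb R}e^{it(u+v)}\chi(f_1(u))\chi(f_2(v))\,du\,dv=\int_{\mathbb R}e^{itw}\Big(\int_{\mathbb R}\chi(f_1(w-v))\chi(f_2(v))\,dv\Big)\,dw,
\]
and finally multiplicativity of $\chi$ and its commutation with the integral identify the inner integral as $\chi\big(\int_{\mathbb R}f_1(w-v)f_2(v)\,dv\big)=\chi\big((f_1\circ f_2)(w)\big)$. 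Assembling the pieces yields $\omega(F_1\star F_2)(it)=\omega(F_1)(it)\,\omega(F_2)(it)$.

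The only real obstacle is the bookkeeping for the convolution term: checking that the double integral converges absolutely so that Fubini applies, and that $\chi$ may be pulled through the Bochner integral. Both are routine given $f_1,f_2\in L_1(\mathbb R,\mathbb H)$ and $\dim_{\mathbb R}\mathbb H<\infty$; everything else is a term-by-term matching identical in spirit to the discrete lemma.
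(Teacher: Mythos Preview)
Your proof is correct and follows essentially the same approach as the paper: apply $\omega$ to $F_1\star F_2$ using the linearity and multiplicativity of $\chi$, expand the product $\omega(F_1)(it)\,\omega(F_2)(it)$, and match terms. The only cosmetic difference is that where you spell out the Fubini/change-of-variable argument for the double integral explicitly, the paper simply invokes the known product rule in $\mathcal W^{2\times 2}(\mathbb R)$ to collapse that step.
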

\begin{proof}
We have
\[
\begin{split}
(\omega(F_1\star F_2))(it)&=\chi(c_1c_2)+\int_{\mathbb R} e^{itu}\chi(f_1c_2+c_1f_2+f_1\circ f_2)(u)\, du\\
&=\chi(c_1c_2)+\int_{\mathbb R} e^{itu}(\chi(f_1c_2)+\chi(c_1f_2)+\chi(f_1\circ f_2))(u)\, du\\
&=\chi(c_1c_2)+\int_{\mathbb R} e^{itu}(\chi(f_1)\chi(c_2)+\chi(c_1)\chi(f_2)+\chi(f_1)\circ \chi(f_2))(u)\, du.\\
\end{split}
\]
Moreover,
\[
\begin{split}
\omega(F_1)(it)&\omega(F_2)(it)=\left(\chi(c_1)+\int_{\mathbb R} e^{itu}\, \chi(f_1(u))\, du\right)
\left(\chi(c_2)+\int_{\mathbb R} e^{itu}\, \chi(f_2(u))\, du\right)\\
&=\chi(c_1c_2)+\int_{\mathbb R} e^{itu}(\chi(f_1)\chi(c_2)+\chi(c_1)\chi(f_2)+\chi(f_1)\circ \chi(f_2))(u)\, du,\\
\end{split}
\]
where the last equality follows from the case of functions in $\mathcal W^{2\times 2}(\mathbb R)$.
\end{proof}
Our main result as follows:
\begin{theorem}\label{thm4.3}
Let $F\in\mathcal W(\mathbb R, \mathbb H)$. The following are equivalent{\rm :}
\begin{enumerate}
\item[(i)] The function $F$ is invertible in $\mathcal W(\mathbb R ,\mathbb H)${\rm ;}
\item[(ii)]  For any $i\in\mathbb S$ we have $\det \omega(F)(it)\not=0$, for all $t\in \mathbb R${\rm ;}
\item[(iii)] There exists an $i\in\mathbb S$ such that $\det \omega(F)(it)\not=0$, for all $t\in \mathbb R${\rm ;}
\item[(iv)] The function $F(p)\not=0$ for all $p\in\mathbb R\mathbb S$.
\end{enumerate}
\label{theorem43}
\end{theorem}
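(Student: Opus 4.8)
The plan is to mimic the proof of Theorem~\ref{wiener}, transporting each step from the unit circle to the line $\mathbb R\mathbb S$, while keeping track of one new feature of the continuous setting: elements of the classical Wiener algebra $\mathcal W^{2\times 2}(\mathbb R)$ extend continuously to the one-point compactification $\mathbb R\cup\{\infty\}$, with value $\omega(F)(\infty)=\chi(c_1)$, so the behaviour at infinity must be carried along. Accordingly I would read conditions (ii)--(iv) as including the requirement at $\infty$, namely $\det\chi(c_1)=|c_1|^2\neq0$, i.e.\ $c_1\neq0$ (this is already needed in the classical scalar case, where $t\mapsto(1+t^2)^{-1}$ is non-vanishing on $\mathbb R$ yet not invertible in $\mathcal W(\mathbb R)$). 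With this understood, ${\rm (i)}\Longrightarrow{\rm (ii)}$ is immediate from Lemma~\ref{lemmastar}: if $F\star G=1$ then $\omega(F)(it)\,\omega(G)(it)=\omega(1)(it)=I_2$ for every $t\in\mathbb R$ and every pair of orthogonal $i,j\in\mathbb S$, whence $\det\omega(F)(it)\neq0$; and ${\rm (ii)}\Longrightarrow{\rm (iii)}$ is trivial.

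For ${\rm (iii)}\Longrightarrow{\rm (i)}$ I would argue as for Theorem~\ref{wiener}. Since $\omega(F)\in\mathcal W^{2\times2}(\mathbb R)$ and $\det\omega(F)$ is non-vanishing on $\mathbb R\cup\{\infty\}$, the classical matricial Wiener--L\'evy theorem for $\mathcal W^{2\times2}(\mathbb R)$ gives a two-sided inverse $G\in\mathcal W^{2\times2}(\mathbb R)$, $\omega(F)G=G\,\omega(F)=I_2$. Writing $\omega(F)(it)=\bigl(\begin{smallmatrix}\mathsf a(it)&\mathsf b(it)\\ -\overline{\mathsf b(-it)}&\overline{\mathsf a(-it)}\end{smallmatrix}\bigr)$ (the structure of \eqref{omegaij} with $z,\bar z$ replaced by $it,-it$), Cramer's rule for $2\times2$ matrices forces $G$ to have the same shape, $G(it)=\bigl(\begin{smallmatrix}c(it)&d(it)\\ -\overline{d(-it)}&\overline{c(-it)}\end{smallmatrix}\bigr)$ with $c,d\in\mathcal W(\mathbb R,\mathbb C_i)$. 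Reading off the constant terms $c_0,d_0$ and the $L_1$-densities $\gamma,\delta$ of $c,d$ and setting $g(p)=(c_0+d_0 j)+\int_{\mathbb R}e^{pu}(\gamma(u)+\delta(u)j)\,du$ produces $g\in\mathcal W(\mathbb R,\mathbb H)$ with $\omega(g)=G$; then Lemma~\ref{lemmastar} yields $\omega(F\star g)=\omega(g\star F)=I_2=\omega(1)$, and since $\omega$ is injective on $\mathcal W(\mathbb R,\mathbb H)$ (injectivity of $\chi$ together with uniqueness of the constant term and of the Fourier transform) we conclude $F\star g=g\star F=1$.

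The equivalence ${\rm (iii)}\Longleftrightarrow{\rm (iv)}$ is where the non-commutative geometry of $\mathbb H$ enters, exactly as in Theorem~\ref{wiener}. Set $F^c(p)=\overline{c_1}+\int_{\mathbb R}e^{pu}\overline{f_1(u)}\,du\in\mathcal W(\mathbb R,\mathbb H)$; using $\overline{ab}=\bar b\,\bar a$ and the change of variables $v\mapsto u-v$ in the convolution one checks that $F\star F^c$ has real constant term $|c_1|^2$ and real-valued $L_1$-density, and the block computation of Theorem~\ref{wiener} carries over to give $(\det\omega(F))(it)=(F\star F^c)_{|\mathbb C_i}(it)$ for all $t\in\mathbb R$ and all $i\in\mathbb S$. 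Next, $F$ and $F^c$ restrict to slice functions on $\mathbb R\mathbb S$ — for $p=iy$ one has $F(iy)=\alpha(y)+i\beta(y)$ with $\alpha$ even, $\beta$ odd, and both independent of $i$ — so the representation formula and the description of zero sets of slice functions apply (cf.\ \cite{css}): the zero set of $F\star F^c$ in $\mathbb R\mathbb S$ is a union of spheres $[p_0]$ with $p_0\in\mathbb R\mathbb S$; moreover, by the analogue of \eqref{pointwise} for $\mathcal W(\mathbb R,\mathbb H)$ together with the conjugation argument of Theorem~\ref{wiener} (which shows $F^c$ has a zero on $[p_0]$ if and only if $F$ does), the sphere $[p_0]$ meets the zero set of $F\star F^c$ if and only if it meets that of $F$. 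Combining these facts, $\det\omega(F)$ has a zero at some $it$, $t\in\mathbb R$, $\Longleftrightarrow$ $F\star F^c$ vanishes somewhere on $\{it:t\in\mathbb R\}$ $\Longleftrightarrow$ (as $F\star F^c$ has real coefficients) $F\star F^c$ vanishes on a whole sphere $[p_0]\subseteq\mathbb R\mathbb S$ $\Longleftrightarrow$ $F$ vanishes somewhere on $\mathbb R\mathbb S$; this is the required equivalence (the value at $\infty$ being accounted for by $\det\chi(c_1)=|c_1|^2$), and since it holds for every $i\in\mathbb S$ it simultaneously closes the loop ${\rm (ii)}\Longleftrightarrow{\rm (iii)}$.

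I expect the main obstacle to be precisely the geometric input used in ${\rm (iii)}\Longleftrightarrow{\rm (iv)}$: establishing, for the integral (Fourier) representation, the analogue of the pointwise product formula \eqref{pointwise} and the ``zeros come in spheres'' principle. In the discrete case \eqref{pointwise} followed from extending the polynomial identity of \cite{lam} to power series, whereas here one must verify that the $\star$-product of Section~4 agrees, on $\mathbb R\mathbb S$, with the slice-regular $\star$-product, and that the relevant functions are genuine slice functions to which the structure theory of \cite{css} applies. The remaining points — the precise form of $G=\omega(F)^{-1}$ via Cramer's rule and the clean incorporation of the value at infinity into the matricial Wiener--L\'evy theorem — are routine.
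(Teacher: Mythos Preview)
Your proof is correct, and the obstacle you flag resolves cleanly: the pointwise identity $(F\star G)(it)=F(it)\,G\bigl(F(it)^{-1}(it)F(it)\bigr)$ does carry over to the continuous algebra, since for any nonzero quaternion $q$ one has $q\,e^{(q^{-1}iq)tu}=e^{itu}q$, and substituting this into the definition of the $\star$-product collapses it to exactly the stated formula. With that in hand, your transfer of the $F\star F^c$ argument from Theorem~\ref{wiener} goes through, and the identity $(\det\omega(F))(it)=(F\star F^c)(it)$ (which follows at once from Lemma~\ref{lemmastar} since $F\star F^c$ has real density and hence $\omega(F\star F^c)$ is scalar) also gives the $i$-independence you need for ${\rm(ii)}\Leftrightarrow{\rm(iii)}$.

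The paper takes a different route. For the structure of the inverse it does not use Cramer's rule but a $J_1$-conjugation symmetry: one checks $J_1\overline{\omega(F)(it)}J_1^T=\omega(F)(it)$ (respectively the analogous identity for $\chi(F)$), and uniqueness of the solution to the resulting integral equation forces the inverse to inherit the same symmetry. More significantly, for the equivalence with (iv) the paper avoids $F\star F^c$ and slice geometry altogether. The direction ${\rm(iii)}\Rightarrow{\rm(iv)}$ is a one-liner: writing $\omega(F)(it)=\bigl(\begin{smallmatrix}A(it)&B(it)\\ -\overline{B(-it)}&\overline{A(-it)}\end{smallmatrix}\bigr)$ and $F(it)=A(it)+B(it)j$, a zero of $F$ at $it_0$ kills the first row of $\omega(F)(it_0)$, hence the determinant. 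For ${\rm(iv)}\Rightarrow{\rm(i)}$ the paper works not with $\omega(F)$ but with the pointwise map $\chi(F(it))$, whose determinant is $|F(it)|^2$, so that (iv) gives pointwise invertibility directly; the $J_1$-symmetry argument then recovers the quaternionic structure of the inverse. Your approach is more uniform with the discrete case and makes ${\rm(ii)}\Leftrightarrow{\rm(iii)}$ transparent; the paper's is more elementary for ${\rm(iii)}\Rightarrow{\rm(iv)}$ but requires the additional $\chi$-based detour for the return direction.
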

\begin{proof}
Assume that (i) holds, and denote by $F^{-1}$ the inverse of $F$ in $\mathcal W(\mathbb R, \mathbb H)$. Since $(F\star F^{-1})(p)=1$, $p\in\mathbb S\mathbb R$,  Lemma \ref{lemmastar} gives $\omega(F)({it}) \omega(F^{-1})({it})=I_2$ and so
$\det\omega (F)({it})\not=0$ and (ii) holds.
To show the converse, let us assume that $\det \omega(F)(it)\not=0$, for all $t\in\mathbb R$, for some fixed $i\in\mathbb S$. Thus the function $\omega(F)(it)$ admits an inverse in $\mathcal W^{2\times 2}(\mathbb R)$,  where $F$ is as in formula (\ref{Effe}), given by
$$
\omega(F)^{-1}(it)=M + \int_{\mathbb R}e^{itu} \begin{pmatrix} c(u)& d(u)\\
e(u) & f(u)\end{pmatrix} \, du.
$$
Since $c_1\not=0$, and so $\chi(c_1)$ is invertible, we have $M=\chi(c_1^{-1})$. It is not reductive to assume that $M=I_2$, (in the general case the computations below are slightly longer but they are essentially the same).
By considering the product $\omega(F)\omega(F)^{-1}$, we obtain
$$
I_2=\omega(F)(it)\omega(F)^{-1}(it)=I_2+\int_{\mathbb R} e^{itu}\left(\int_{\mathbb R}
\begin{pmatrix} a(u-v)& b(u-v)\\
-\overline{b(u-v)} & \overline{a(u-v)}\end{pmatrix}\begin{pmatrix} c(v)& d(v)\\
e(v) & f(v)\end{pmatrix} dv + \right.
$$
$$
\left.
\begin{pmatrix} c(u)& d(u)\\
e(u) & f(u)\end{pmatrix}+\begin{pmatrix} a(u)& b(u)\\
-\overline{b(u)} & \overline{a(u)}\end{pmatrix}\right) du
$$
and so
$$
\int_{\mathbb R} e^{itu}\left\{\int_{\mathbb R}
\begin{pmatrix} a(u-v)& b(u-v)\\
-\overline{b(u-v)} & \overline{a(u-v)}\end{pmatrix}\begin{pmatrix} c(v)& d(v)\\
e(v) & f(v)\end{pmatrix} dv + \right.
$$
$$
\left.
\begin{pmatrix} c(u)& d(u)\\
e(u) & f(u)\end{pmatrix}+\begin{pmatrix} a(u)& b(u)\\
-\overline{b(u)} & \overline{a(u)}\end{pmatrix}\right\}du =0_2,
$$
which gives
\begin{equation}\label{eqomega}
\int_{\mathbb R}
\begin{pmatrix} a(u-v)& b(u-v)\\
-\overline{b(u-v)} & \overline{a(u-v)}\end{pmatrix}\begin{pmatrix} c(v)& d(v)\\
e(v) & f(v)\end{pmatrix} dv +
\begin{pmatrix} c(u)& d(u)\\
e(u) & f(u)\end{pmatrix}+\begin{pmatrix} a(u)& b(u)\\
-\overline{b(u)} & \overline{a(u)}\end{pmatrix} =0_2.
\end{equation}
With $J_1$ given by \eqref{elizabeth},
let us take the conjugate of equation \eqref{eqomega} and multiply in a suitable way by $J_1$ and $J_1^T$ to obtain
$$
\int_{\mathbb R}
J_1\begin{pmatrix} \overline{a(u-v)}& \overline{b(u-v)}\\
-{b(u-v)} & {a(u-v)}\end{pmatrix}J_1^TJ_1\begin{pmatrix} \overline{c(v)}& \overline{d(v)}\\
\overline{e(v)} & \overline{f(v)}\end{pmatrix}J_1^T dv
$$
$$
+J_1\begin{pmatrix} \overline{c(v)}& \overline{d(v)}\\
\overline{e(v)} & \overline{f(v)}\end{pmatrix}J_1^T+J_1\begin{pmatrix} \overline{a(u)}& \overline{b(u)}\\
-{b(u)} & {a(u)}\end{pmatrix}J_1^T =0_2
$$
which gives
$$
\int_{\mathbb R}
\begin{pmatrix} {a(u-v)}& {b(u-v)}\\
-\overline{b(u-v)} & \overline{a(u-v)}\end{pmatrix}J_1\begin{pmatrix} \overline{c(v)}& \overline{d(v)}\\
\overline{e(v)} & \overline{f(v)}\end{pmatrix}J_1^T dv
$$
$$
+J_1\begin{pmatrix} \overline{c(v)}& \overline{d(v)}\\
\overline{e(v)} & \overline{f(v)}\end{pmatrix}J_1^T+\begin{pmatrix} {a(u)}& {b(u)}\\
-\overline{b(u)} & \overline{a(u)}\end{pmatrix} =0_2.
$$
Since equation \eqref{eqomega} admits a unique solution we deduce that
$$
J_1\begin{pmatrix} \overline{c(v)}& \overline{d(v)}\\
\overline{e(v)} & \overline{f(v)}\end{pmatrix}J_1^T =\begin{pmatrix} {c(v)}& {d(v)}\\
{e(v)} & {f(v)}\end{pmatrix}
$$
from which it follows
$e(u)=-\overline{d(u)}$ and $f(u)=\overline{c(u)}$. Thus $\omega(F)^{-1}=\omega(G)$ with
$$
G(it)=c_1^{-1}+\int_{\mathbb R}e^{itu} (c(u)+d(u)j)\, du,
$$
and
$G\in\mathcal W(\mathbb R,\mathbb H)$ since $c,d \in L_1(\mathbb R,\mathbb H)$. The function $G$ is the inverse of $F$ in $\mathcal W(\mathbb R,\mathbb H)$.\\

It is trivial to show that (ii) implies (iii). We now show that (iii) implies (iv). Let us fix $i$ and rewrite $\omega(F)(it)$ in the form
$$
\omega(F)(it)= \begin{pmatrix} A(it)& B(it)\\
-\overline{B(-it)} & \overline{A(-it)}\end{pmatrix},
$$
where
$$
A(it)=\alpha + \int_{\mathbb R}e^{itu} a(u)\, du, \qquad B(it)=\beta + \int_{\mathbb R}e^{itu} b(u)\, du,
$$
where $c_1=\alpha+\beta j$.\smallskip

Suppose to the contrary that there exists $t_0\in\mathbb R$ such that $F(it_0)=0$. Then $A(it_0)=B(it_0)=0$ and so
we have $\det(\omega(F))(it_0)=0$ a contradiction.\smallskip

Let us now assume (iv), and prove that (i) is in force. Since $F(it)$ is invertible for all $t\in\mathbb R$ and all $i\in\mathbb S$, the matrix $\chi(F)(it)$ is invertible for every $i \in \mathbb{S}$ and $t \in \mathbb{R}$ and so the function $\chi(F)$ is invertible in $\mathcal W^{2\times 2}(\mathbb R)$
for every $i\in\mathbb S$.
Let us write
\[
\begin{split}
(\chi(F))(it)&= \chi(c_1)+
\begin{pmatrix} \int_{\mathbb R} e^{itu} a(u)\, du& \int_{\mathbb R} e^{itu} b(u)\, du\\
-\int_{\mathbb R} e^{-itu} \overline{b(u)}\, du & \int_{\mathbb R} e^{-itu} \overline{a(u)}\, du\end{pmatrix}
\\
&=\chi(c_1)+
\begin{pmatrix} \int_{\mathbb R} e^{itu} a(u)\, du& \int_{\mathbb R} e^{itu} b(u)\, du\\
-\int_{\mathbb R} e^{itu} \overline{b(-u)}\, du & \int_{\mathbb R} e^{itu} \overline{a(-u)}\, du\end{pmatrix}
\end{split}
\]
and since $\chi(F)$ is invertible in $\mathcal W^{2\times 2}(\mathbb R)$, there exists $H$ such that $(\chi(F)(it))H(it)=I_2$.
Let us write
$$
H(it)=\chi(c_1^{-1})+\begin{pmatrix} C(it)& D(it)\\
E(it) & L(it)\end{pmatrix},
$$
and, up to a normalization we assume that $\chi(c_1)=\chi(c_1^{-1})=I_2$.
Then we have
\begin{equation}\label{inverse}
\begin{split}
\chi(F)(it)H(it)&=I_2 \\
& = \left(I_2+
\begin{pmatrix} \int_{\mathbb R} e^{itu} a(u)\, du& \int_{\mathbb R} e^{itu} b(u)\, du\\
-\int_{\mathbb R} e^{itu} \overline{b(-u)}\, du & \int_{\mathbb R} e^{itu} \overline{a(-u)}\, du\end{pmatrix}
\right) \left(I_2+\begin{pmatrix} C(it)& D(it)\\
E(it) & L(it)\end{pmatrix}\right)\\
& = I_2+
\begin{pmatrix} \int_{\mathbb R} e^{itu} a(u)\, du& \int_{\mathbb R} e^{itu} b(u)\, du\\
-\int_{\mathbb R} e^{itu} \overline{b(-u)}\, du & \int_{\mathbb R} e^{itu} \overline{a(-u)}\, du\end{pmatrix}
\begin{pmatrix} C(it)& D(it)\\
E(it) & L(it)\end{pmatrix}\\
&+
\begin{pmatrix} \int_{\mathbb R} e^{itu} a(u)\, du& \int_{\mathbb R} e^{itu} b(u)\, du\\
-\int_{\mathbb R} e^{itu} \overline{b(-u)}\, du & \int_{\mathbb R} e^{itu} \overline{a(-u)}\, du\end{pmatrix}
+\begin{pmatrix} C(it)& D(it)\\
E(it) & L(it)\end{pmatrix}
\\
\end{split}
\end{equation}
is uniquely solvable.
Reasoning as in the proof of (ii) implies (i), we can take the conjugate of \eqref{inverse} and suitably multiplying by $J_1$ and $J_1^T$ we obtain:
\[
\begin{split}
I_2&= I_2 +J_1
\begin{pmatrix} \int_{\mathbb R} e^{-itu} \overline{a(u)}\, du& \int_{\mathbb R} e^{-itu}\overline{ b(u)}\, du\\
-\int_{\mathbb R} e^{-itu} {b(-u)}\, du & \int_{\mathbb R} e^{-itu} {a(-u)}\, du\end{pmatrix}
J_1^T J_1 \begin{pmatrix} \overline{C(it)}& \overline{D(it)}\\
\overline{E(it)} & \overline{L(it)}\end{pmatrix}J_1^T\\
&+J_1
\begin{pmatrix} \int_{\mathbb R} e^{-itu} \overline{a(u)}\, du& \int_{\mathbb R} e^{-itu}\overline{ b(u)}\, du\\
-\int_{\mathbb R} e^{-itu} {b(-u)}\, du & \int_{\mathbb R} e^{-itu} {a(-u)}\, du\end{pmatrix}
J_1^T + J_1 \begin{pmatrix} \overline{C(it)}& \overline{D(it)}\\
\overline{E(it)} & \overline{L(it)}\end{pmatrix} J_1^T
\\
&=I_2+
\begin{pmatrix} \int_{\mathbb R} e^{-itu} {a(-u)}\, du& \int_{\mathbb R} e^{-itu}{b(-u)}\, du\\
-\int_{\mathbb R} e^{-itu} \overline{ b(u)} \, du & \int_{\mathbb R} e^{-itu} \overline{a(u)}\, du\end{pmatrix}
  \begin{pmatrix} \overline{L(it)}& -\overline{E(it)}\\
-\overline{D(it)} & \overline{C(it)}\end{pmatrix}
\\
&+
\begin{pmatrix} \int_{\mathbb R} e^{-itu} {a(-u)}\, du& \int_{\mathbb R} e^{-itu}{b(-u)}\, du\\
-\int_{\mathbb R} e^{-itu} \overline{ b(u)} \, du & \int_{\mathbb R} e^{-itu} \overline{a(u)}\, du\end{pmatrix}
 + \begin{pmatrix} \overline{L(it)}& -\overline{E(it)}\\
-\overline{D(it)} & \overline{C(it)}\end{pmatrix}
\\
&=I_2+
\begin{pmatrix} \int_{\mathbb R} e^{itu} {a(u)}\, du& \int_{\mathbb R} e^{itu}{b(u)}\, du\\
-\int_{\mathbb R} e^{itu} \overline{ b(-u)} \, du & \int_{\mathbb R} e^{itu} \overline{a(-u)}\, du\end{pmatrix}
  \begin{pmatrix} \overline{L(it)}& -\overline{E(it)}\\
-\overline{D(it)} & \overline{C(it)}\end{pmatrix}\\
&+
\begin{pmatrix} \int_{\mathbb R} e^{itu} {a(u)}\, du& \int_{\mathbb R} e^{itu}{b(u)}\, du\\
-\int_{\mathbb R} e^{itu} \overline{ b(-u)} \, du & \int_{\mathbb R} e^{itu} \overline{a(-u)}\, du\end{pmatrix}
+  \begin{pmatrix} \overline{L(it)}& -\overline{E(it)}\\
-\overline{D(it)} & \overline{C(it)}\end{pmatrix}.
\end{split}
\]
We deduce that $C(it)=\overline{L(it)}$, $D(it)=-\overline{E(it)}$, $E(it)=-\overline{D(it)}$, $L(it)=\overline{C(it)}$ and
$$
H(it)=I_2+\begin{pmatrix} C(it)& D(it)\\
-\overline{D(it)} & \overline{C(it)}\end{pmatrix}.
$$
The function $1+(C(it)+D(it)j)=1+\int_{\mathbb R} e^{itu} (c(u)+d(u)j) du$, for some suitable $c(u), d(u)$ is, by construction the inverse of $F$ in $\mathcal W(\mathbb R,\mathbb H)$ and thus (i) holds.
This concludes the proof.
\end{proof}

Unlike in the discrete case, the notion of positivity and the associated factorization is slice dependent.
To illustrate this fact, fix some $i\in\mathbb S$ and consider the
function $F(p)=\frac{1-ip}{1+p^8}$. Then $F(it)=\frac{1+t}{1+t^8}$ belongs to the classical Wiener algebra (since it is real valued and has no
no poles on the real line; see for instance \cite{bgk2}) and there exists $M>0$ such that $G(t)=M+F(it)>0$ for all $t\in\mathbb R$. So the function $G(t)$ admits a spectral factorization. On the other hand for $j\in\mathbb S$ different from $\pm i$ the function
$F(jt)$ is not real valued, and, in particular, is not positive on the real line.

\begin{e-definition}
\label{def:August31tr1}
Fix $i \in \mathbb{S}$. We denote by $\mathcal{W}_+(i \mathbb{R},\mathbb H)$ and $\mathcal{W}_-(i \mathbb{R},\mathbb H)$ the set of elements $F(iy) = c + \int_{-\infty}^{\infty} e^{i y u} f(u) du$, with $F\in\mathcal W(\mathbb{R},\mathbb H))$
so that $f(u) = 0$ for $u < 0$ and $f(u) = 0$ for $u > 0$, respectively.
\end{e-definition}

It is immediate that $\mathcal W_+(i \mathbb{R},\mathbb H)$ and $\mathcal W_{-}(i \mathbb{R},\mathbb H)$ are subalgebras of $\mathcal W(\mathbb{R},\mathbb H)$.

\begin{theorem}
Let $F\in\mathcal W_+(i\mathbb R,\mathbb H)$. Then the following are equivalent{\rm :}
\begin{enumerate}
\item[(i)] $F$ is invertible in $\mathcal W_+(i \mathbb{R},\mathbb H)$ (with inverse denoted by $F^{-1}$){\rm ;}

\item[(ii)] $\omega(F)$ is invertible in $\mathcal W_+^{2\times 2}(\mathbb R)$.

\end{enumerate}
\end{theorem}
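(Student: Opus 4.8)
The plan is to mimic the proof of Theorem~\ref{thm4.3}, transcribed to the ``plus'' setting, the role of the matrix Wiener--L\'evy theorem now being played by hypothesis~(ii) itself. Two preliminary remarks make the translation clean. First, $\omega$ maps $\mathcal W_+(i\mathbb R,\mathbb H)$ into $\mathcal W_+^{2\times 2}(\mathbb R)$: if $F(iy)=c_1+\int_0^\infty e^{iyu}f_1(u)\,du$ with $f_1$ supported on $[0,\infty)$, then $\omega(F)(it)=\chi(c_1)+\int_0^\infty e^{itu}\chi(f_1(u))\,du$ and $\chi(f_1(u))=0$ for $u<0$. Second, $\omega$ is injective on $\mathcal W(\mathbb R,\mathbb H)$, since $\chi$ is injective and an element of $\mathcal W^{2\times 2}(\mathbb R)$ vanishes identically only if its constant part and its $L_1$-density both vanish (Riemann--Lebesgue, then injectivity of the Fourier transform on $L_1$). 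With these in hand, the implication (i)$\Rightarrow$(ii) is immediate: if $G\in\mathcal W_+(i\mathbb R,\mathbb H)$ is the inverse of $F$, then $\omega(G)\in\mathcal W_+^{2\times 2}(\mathbb R)$ and, by Lemma~\ref{lemmastar}, $\omega(F)\,\omega(G)=\omega(F\star G)=\omega(1)=I_2$ and likewise $\omega(G)\,\omega(F)=I_2$, so $\omega(G)=\omega(F)^{-1}$ in $\mathcal W_+^{2\times 2}(\mathbb R)$.

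For (ii)$\Rightarrow$(i) let $G\in\mathcal W_+^{2\times 2}(\mathbb R)$ be the inverse of $\omega(F)$. The crucial point is to show that $G$ carries the ``quaternionic'' block structure, i.e.\ that $G=\omega(H)$ for some $H\in\mathcal W_+(i\mathbb R,\mathbb H)$. I would obtain this from the symmetry identity
\[
J_1\,\overline{\omega(F)(-it)}\,J_1^{T}=\omega(F)(it),\qquad t\in\mathbb R,
\]
which holds for every $F\in\mathcal W(\mathbb R,\mathbb H)$ (a one-line computation on the block form of $\omega(F)$ used in the proof of Theorem~\ref{thm4.3}), together with $J_1J_1^{T}=I_2$. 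Indeed, conjugating $\omega(F)(-it)G(-it)=I_2$ gives $\overline{\omega(F)(-it)}=\overline{G(-it)}^{\,-1}$, so the identity above rewrites as $\omega(F)(it)=\bigl(J_1\,\overline{G(-it)}\,J_1^{T}\bigr)^{-1}$, whence $G(it)=J_1\,\overline{G(-it)}\,J_1^{T}$ by uniqueness of inverses. Writing $G(it)=M+\int_0^\infty e^{itu}\begin{pmatrix}c(u)&d(u)\\ e(u)&\ell(u)\end{pmatrix}\,du$ and comparing entries in this last identity forces $e(u)=-\overline{d(u)}$ and $\ell(u)=\overline{c(u)}$, while matching constant parts in $\omega(F)G=I_2$ gives $M=\chi(c_1)^{-1}=\chi(c_1^{-1})$; hence $G=\omega(H)$ with
\[
H(iy)=c_1^{-1}+\int_0^\infty e^{iyu}\bigl(c(u)+d(u)j\bigr)\,du\ \in\ \mathcal W_+(i\mathbb R,\mathbb H).
\]

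It remains to identify $H$ with $F^{-1}$. From $\omega(F)\,\omega(H)=\omega(H)\,\omega(F)=I_2=\omega(1)$ and Lemma~\ref{lemmastar} we get $\omega(F\star H)=\omega(H\star F)=\omega(1)$, and injectivity of $\omega$ yields $F\star H=H\star F=1$, so $F$ is invertible in $\mathcal W_+(i\mathbb R,\mathbb H)$ and (i) holds.

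The step I expect to require the most care is the one where the symmetry is transferred to $G$: one must check that $N(it)\mapsto J_1\,\overline{N(-it)}\,J_1^{T}$ preserves $\mathcal W_+^{2\times 2}(\mathbb R)$ (it does, since $\overline{N(-it)}$ has the same one-sided Fourier support as $N$), so that the uniqueness-of-inverse argument is legitimate inside that algebra, and then that a matrix of $\mathcal W_+^{2\times 2}(\mathbb R)$ fixed by this involution is exactly the $\omega$-image of an element of $\mathcal W_+(i\mathbb R,\mathbb H)$. Everything else is a routine transcription of the computations already carried out for Theorem~\ref{thm4.3}.
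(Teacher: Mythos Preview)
Your proposal is correct and follows essentially the same route as the paper. The paper's proof is extremely terse: for (i)$\Rightarrow$(ii) it multiplies $\omega(F)\omega(F^{-1})=I_2$ and invokes the one-sided support (phrased there as ``analytic extension to the left half-plane''), and for (ii)$\Rightarrow$(i) it simply points to the argument of Theorem~\ref{thm4.3} with integrals restricted to $[0,\infty)$; your write-up carries out exactly that transcription, packaging the $J_1$-symmetry at the level of $\omega(F)(it)$ rather than at the level of the density equation~\eqref{eqomega}, which is a cosmetic but arguably cleaner variant of the same computation.
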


\begin{proof} Assume that (i) holds. Then $\omega(F)(it)\omega(F^{-1})(it)=I_2$. Since both functions
$\omega(F)(it)$ and $\omega(F^{-1})(it)$ have analytic extensions (as functions of $t$) to the open left half-plane, (ii) follows.\smallskip

Assume now that (ii) holds.Then the same arguments as in the proof of Theorem \ref{theorem43} (with integrals on $[0,\infty)$ instead of $\mathbb R$) will show that $(i)$ holds.
\end{proof}

We note that if $F\in\mathcal W(\mathbb R,\mathbb H)$ is strictly positive on $i\mathbb R$ it admits a spectral factorization with factor in
$\mathcal W_+(i\mathbb R,\mathbb H)$. The arguments are similar and omitted.

\section{Hardy spaces and Wiener-Hopf operators}
\setcounter{equation}{0}
We will begin by recalling some classical facts from the theory of Hardy spaces. Let $i\in\mathbb S$. We shall identify as usual the complex
plane $\mathbb C_i$ with $\mathbb C$. Let $\Pi_+ = \{ z \in \mathbb{C}: {\rm Re}\; z > 0 \}$. The Hardy space $H_2(\Pi_+)$ consists of functions $f$ which are holomorphic on $\Pi_+$ and satisfy
$$\sup_{x > 0} \int_{-\infty}^{\infty} |f(x+iy)|^2 dy < \infty.$$
Recall that $f \in H_2(\Pi_+)$ has nontangential limits $f(iy)$ for almost all (with respect to the Lebesgue measure) $iy$, $f|_{i \mathbb{R}} \in L_2(\mathbb{R})$ and
$$\sup_{x > 0} \int_{-\infty}^{\infty} |f(x + iy)|^2 dy = \int_{-\infty}^{\infty} |f(iy)|^2 dy.$$
An important fact is that $f \in H_2(\Pi_+)$ if and only if there exists $\tilde{f} \in L_2(\mathbb{R})$ such that
\begin{equation}
\label{eq:August27uy1}
f(z) = \int_0^{\infty} e^{i z x} \tilde{f}(x) dx \quad {\rm for} \quad z \in \Pi_+.
\end{equation}

Upon identifying functions $f \in H_2(\Pi_+)$ with their boundary values, it can be checked that $H_2(\Pi_+)$ is a closed subspace of $L_2(i \mathbb{R})$, i.e., the Hilbert space of complex-valued functions $f$ defined on $i \mathbb{R}$ so that
$$\int_{-\infty}^{\infty} |f (i y)|^2 dy < \infty.$$
The inner product $\langle \cdot , \cdot \rangle_{L_2(i \mathbb{R})}$ is given by
$$\langle f, g \rangle_{L_2(i \mathbb{R})} = \int_{-\infty}^{\infty} \overline{g(i y )} f(i y) dy.$$
 It can also be checked that $H_2(\Pi_-) := H_2(\Pi_+)^{\perp}$ consists of functions $f$ which are holomorphic on $\Pi_- = \{ z \in \mathbb{C}: {\rm Re}\; z < 0 \}$ and satisfy
$$\sup_{x < 0 } \int_{-\infty}^{\infty} | f(x+iy) |^2 dy < \infty.$$
An important fact is that $f \in H_2(\Pi_-)$ if and only if there exists $\tilde{f} \in L_2(\mathbb{R})$ such that
\begin{equation}
\label{eq:August27gg1}
f(z) = \int_{-\infty}^0 e^{i z x} \tilde{f}(x) dx.
\end{equation}
Finally, upon identifying functions $H_2(\Pi_+)$ and $H_2(\Pi_-)$ via their boundary values with functions in $L_2(i \mathbb{R})$,
\begin{equation}
\label{eq:August27kk1}
L_2(i \mathbb{R} ) = H_2(\Pi_+) \oplus H_2(\Pi_-)
\end{equation}
follows from \eqref{eq:August27uy1} and \eqref{eq:August27gg1}.

Returning to the quaternionic setting, see \cite{kyp}, let $\mathbb{H}_+ = \{ p \in \mathbb{H}: {\rm Re}\; p > 0 \}$ and $\Pi_{+, i} = \mathbb{H}_+ \cap \mathbb{C}_i$ for any $i \in \mathbb{S}$. Let $f_i = f|_{\Pi_{+, i}}$, where $f$ is a function defined on $\mathbb{H}_+$. We let $H_2(\Pi_+, i)$ denote the space of slice hyperholomorphic functions on $\mathbb{H}_+$ so that
$$\int_{-\infty}^{\infty} |f_i(iy) |^2 dy < \infty,$$
where $f_i(iy)$ denote the nontangential limit at $iy$ of $f_i$.

Given $f \in H_2(\Pi_{+, i} )$, the restriction $f_i$ can be written as
\begin{equation}
\label{eq:August28qd55}
f_i(x+iy) = f_1(x+iy) + f_2(x + iy) j,
\end{equation}
where $j \in \mathbb{S}$ so that $ij = -1$, i.e., $i$ and $j$ are orthogonal and $f_1, f_2$ are $\mathbb{C}_i$-valued holomorphic functions. Note that
\begin{equation}
\label{eq:August28tj1}
f_i(iy) = f_1(iy) + f_2(iy) j \quad {\rm a.e.}
\end{equation}
and
\begin{equation}
\label{eq:August28yy2}
|f_i(x+iy) |^2 = |f_1(x+iy)|^2 + |f_2(x+iy)|^2.
\end{equation}

Thus, making use of \eqref{eq:August28tj1} and \eqref{eq:August28yy2} we get
\begin{align}
 \sup_{x > 0 } \int_{-\infty}^{\infty} |f_i(x+iy)|^2 dy =& \; \int_{-\infty}^{\infty} |f_1(iy)|^2 dy + \int_{-\infty}^{\infty} |f_2(iy)|^2 dy \nonumber \\
 =& \; \int_{-\infty}^{\infty} |f_i(iy)|^2 dy. \label{eq:August28uu1}
\end{align}

Let $j \in \mathbb{S}$ be orthogonal to $i \in \mathbb{S}$. We let $L_2(i \mathbb{R}, \mathbb{H})$ consist of all functions $g$ of the form
$$g(iy) = g_1(iy) + g_2(iy) j,$$
where $g_1$ and $g_2$ belong to $L_2(i \mathbb{R} )$, with norm
$$
\| g \|_{L_2(i \mathbb{R}, \mathbb{H})} = (\| g_1 \|_{L_2(i \mathbb{R})}^2 + \| g_2 \|_{L_2(i \mathbb{R})}^2)^{1/2}
$$

Summarizing the above discussion we arrive at Theorem \ref{thm:August28ii6} and \ref{thm:August28gb1}.

\begin{theorem}
\label{thm:August28ii6}
Let $j \in \mathbb{S}$ be orthogonal to $i \in \mathbb{S}$. The space $H_2(\Pi_{+,i})$ is a closed subspace of $L_2(i \mathbb{R}, \mathbb{H})$, upon identifying functions in $H_2(\Pi_{+, i})$ with their boundary values. Moreover,
\begin{equation}
\label{eq:August28rr1}
L_2(i \mathbb{R}, \mathbb{H}) = H_2(\Pi_{+, i}) \oplus H_2(\Pi_{+, i})^{\perp}.
\end{equation}
\end{theorem}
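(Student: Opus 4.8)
The plan is to push the statement down to the classical scalar decomposition \eqref{eq:August27kk1}, namely $L_2(i\mathbb R)=H_2(\Pi_+)\oplus H_2(\Pi_-)$, by means of the componentwise splitting $f_i=f_1+f_2j$ of \eqref{eq:August28qd55} together with the norm identities \eqref{eq:August28yy2}--\eqref{eq:August28uu1}. Write $\mathfrak P$ for the classical Riesz projection of $L_2(i\mathbb R)$ onto $H_2(\Pi_+)$ along $H_2(\Pi_-)$. The first step is a dictionary: a slice hyperholomorphic function $f$ on $\mathbb H_+$ lies in $H_2(\Pi_{+,i})$ if and only if both $\mathbb C_i$-valued components $f_1,f_2$ in $f_i=f_1+f_2j$ lie in $H_2(\Pi_+)$. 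Indeed, \eqref{eq:August28yy2} gives $|f_k(x+iy)|\le|f_i(x+iy)|$, so membership of $f$ forces $f_1,f_2\in H_2(\Pi_+)$; conversely, the splitting lemma for slice hyperholomorphic functions recovers such an $f$ from any pair $(f_1,f_2)$, and \eqref{eq:August28uu1} then puts it in $H_2(\Pi_{+,i})$. In particular the boundary-value map is injective on $H_2(\Pi_{+,i})$ (it is so componentwise), which legitimizes the identification in the statement.

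For closedness, observe that $g=g_1+g_2j\mapsto(g_1,g_2)$ is an $\mathbb R$-linear isometry of $L_2(i\mathbb R,\mathbb H)$ onto $L_2(i\mathbb R)\oplus L_2(i\mathbb R)$ which, by the dictionary, carries $H_2(\Pi_{+,i})$ exactly onto $H_2(\Pi_+)\oplus H_2(\Pi_+)$. Since $H_2(\Pi_+)$ is closed in $L_2(i\mathbb R)$, so is the image, and hence $H_2(\Pi_{+,i})$ is closed in $L_2(i\mathbb R,\mathbb H)$.

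For the orthogonal decomposition I would introduce $P\colon L_2(i\mathbb R,\mathbb H)\to L_2(i\mathbb R,\mathbb H)$ by $P(g_1+g_2j)=(\mathfrak Pg_1)+(\mathfrak Pg_2)j$. Then $P$ is bounded and $\mathbb R$-linear, $P^2=P$, $\operatorname{ran}P=H_2(\Pi_{+,i})$ by the dictionary, and $\ker P$ is the set of $g$ with $g_1,g_2\in H_2(\Pi_-)$. The crucial point is that $P$ is self-adjoint for the quaternionic inner product $\langle f,g\rangle=\int_{-\infty}^{\infty}\overline{g(iy)}f(iy)\,dy$: expanding $\overline g f$ using $\overline{g_2j}=-g_2j$ and $jz=\overline z\,j$ for $z\in\mathbb C_i$, the $\mathbb C_i$-component of $\langle f,g\rangle$ works out to $\langle f_1,g_1\rangle_{L_2(i\mathbb R)}+\langle g_2,f_2\rangle_{L_2(i\mathbb R)}$, and self-adjointness of $\mathfrak P$ on $L_2(i\mathbb R)$ transfers to $P$. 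It then follows that $\ker P=(\operatorname{ran}P)^\perp=H_2(\Pi_{+,i})^\perp$, and $f=Pf+(f-Pf)$ yields $L_2(i\mathbb R,\mathbb H)=H_2(\Pi_{+,i})\oplus H_2(\Pi_{+,i})^\perp$. (Equivalently, once $H_2(\Pi_{+,i})$ is known to be closed, this decomposition is just the projection theorem for the quaternionic Hilbert space $L_2(i\mathbb R,\mathbb H)$.)

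The step I expect to be the main nuisance is the self-adjointness of $P$: because $\mathbb H$ is noncommutative and $j$ anticommutes with $\mathbb C_i$, one must track carefully how the ``$f_2,g_2$'' cross terms in $\overline g f$ reassemble into a genuine $\mathbb C_i$-inner product (with the arguments swapped) before concluding that componentwise classical projection is self-adjoint in the quaternionic sense. Everything else is a faithful transcription of the scalar Hardy-space facts recalled in the text.
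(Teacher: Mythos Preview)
Your proposal is correct and follows the same route the paper intends: the paper gives no separate proof but simply says the theorem follows from ``summarizing the above discussion,'' i.e.\ the componentwise splitting $f_i=f_1+f_2j$ and the norm identity \eqref{eq:August28yy2}--\eqref{eq:August28uu1}, which is exactly the dictionary you build and use.

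Two minor remarks. First, once closedness is established, the decomposition \eqref{eq:August28rr1} is immediate from the projection theorem in any (quaternionic) Hilbert space, as you yourself note; the explicit construction of $P$ and the self-adjointness check, while correct, are more than the paper supplies or needs. Second, your $P$ is in fact right $\mathbb H$-linear (not merely $\mathbb R$-linear): a short check using $jz=\overline z\,j$ shows $P(fq)=(Pf)q$ for $q\in\{i,j\}$, hence for all $q\in\mathbb H$. This makes the self-adjointness computation sit in the genuine quaternionic-operator framework and removes the ``nuisance'' you anticipated.
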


\begin{theorem}
\label{thm:August28gb1}
Fix $i \in \mathbb{S}$. A function $f \in H_2(\Pi_{+,i})$ if and only if $f_1 \in H_2(\Pi_+)$ and $f_2 \in H_2(\Pi_+)$, where $f_1$ and $f_2$ are as in \eqref{eq:August28qd55}.
\end{theorem}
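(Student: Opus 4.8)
The plan is to push everything back to the scalar Hardy space $H_2(\Pi_+)$ using two ingredients that are already in place: the Splitting Lemma for slice hyperholomorphic functions (which is exactly what lets us write the restriction $f_i$ as $f_1+f_2j$ with $f_1,f_2$ holomorphic $\mathbb C_i$-valued functions on $\Pi_{+,i}\cong\Pi_+$, as recorded in \eqref{eq:August28qd55}) and the orthogonality identity \eqref{eq:August28yy2}, $|f_i(x+iy)|^2=|f_1(x+iy)|^2+|f_2(x+iy)|^2$. Since the Splitting Lemma already supplies holomorphy of $f_1,f_2$, on each side of the equivalence only the $L_2$-size condition has to be checked.

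First I would integrate \eqref{eq:August28yy2} in $y$ to get, for each fixed $x>0$,
\[
\int_{-\infty}^{\infty}|f_i(x+iy)|^2\,dy=\int_{-\infty}^{\infty}|f_1(x+iy)|^2\,dy+\int_{-\infty}^{\infty}|f_2(x+iy)|^2\,dy ,
\]
the same identity holding a.e.\ on $i\mathbb R$ for the nontangential boundary values by \eqref{eq:August28tj1}. For the direction ``$f_1,f_2\in H_2(\Pi_+)\Rightarrow f\in H_2(\Pi_{+,i})$'' I would bound $\sup_{x>0}$ of the left-hand side by the sum of the two finite suprema $\sup_{x>0}\int|f_k(x+iy)|^2\,dy$; this shows $f_i$ has nontangential limits and $\int_{-\infty}^{\infty}|f_i(iy)|^2\,dy=\int_{-\infty}^{\infty}|f_1(iy)|^2\,dy+\int_{-\infty}^{\infty}|f_2(iy)|^2\,dy<\infty$, i.e.\ $f\in H_2(\Pi_{+,i})$. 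For the converse I would use \eqref{eq:August28uu1}, which identifies $\sup_{x>0}\int|f_i(x+iy)|^2\,dy$ with $\int|f_i(iy)|^2\,dy<\infty$; since $|f_k(x+iy)|^2\le|f_i(x+iy)|^2$ pointwise for $k=1,2$, this forces $\sup_{x>0}\int|f_k(x+iy)|^2\,dy<\infty$, hence $f_1,f_2\in H_2(\Pi_+)$.

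The only delicate point is the interchange of $\sup_{x>0}$ with the additive decomposition \eqref{eq:August28yy2}: one needs to know that the supremum over the slice $\Pi_{+,i}$ of $\int|f_i(x+iy)|^2\,dy$ is genuinely controlled by (and, when the $f_k$ lie in $H_2$, equal to the sum of) the analogous suprema for $f_1$ and $f_2$. This is where I would invoke the classical fact that $x\mapsto\int_{-\infty}^{\infty}|g(x+iy)|^2\,dy$ is monotone for holomorphic $g$ on $\Pi_+$ --- equivalently, the Paley--Wiener representation \eqref{eq:August27uy1} --- so that all the relevant suprema are attained coherently as $x\to 0^+$. I expect this bookkeeping to be the main (and rather mild) obstacle; the rest is a direct transcription of the scalar $H_2$ theory through the correspondence $f_i\mapsto(f_1,f_2)$.
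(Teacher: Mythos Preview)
Your proposal is correct and follows exactly the line the paper takes: the paper does not give a separate proof of Theorem~\ref{thm:August28gb1} but presents it as a summary of the preceding discussion, namely the splitting \eqref{eq:August28qd55}, the pointwise identity \eqref{eq:August28yy2}, and its integrated form \eqref{eq:August28uu1}. Your added remark about monotonicity/Paley--Wiener to justify that the suprema are attained as $x\to 0^+$ is a welcome bit of care that the paper leaves implicit.
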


\begin{corollary}
\label{cor:August28uyb10}
Fix $i \in \mathbb{S}$. A function $f \in H_2(\Pi_{+,i})$ if and only if
\begin{equation}
\label{eq:August28un1}
f_i(x+iy) = \int_0^{\infty} e^{i (x+iy) u} \tilde{f}_1(u) du + \left( \int_0^{\infty} e^{i (x+iy) u } \tilde{f}_2(u) du \right) j,
\end{equation}
where $x > 0$, $\tilde{f}_1$ and $\tilde{f}_2$ belong to $L_2(\mathbb{R})$ and $j \in \mathbb{S}$ is orthogonal to $i \in \mathbb{S}$.
\end{corollary}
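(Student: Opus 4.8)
The plan is to obtain the corollary by simply combining Theorem~\ref{thm:August28gb1} with the classical representation \eqref{eq:August27uy1} for the scalar Hardy space $H_2(\Pi_+)$, applied on the complex plane $\mathbb{C}_i$ (identified with $\mathbb{C}$ as at the start of this section).

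First I would assume $f \in H_2(\Pi_{+,i})$ and write the restriction $f_i$ in the form \eqref{eq:August28qd55}, namely $f_i(x+iy) = f_1(x+iy) + f_2(x+iy) j$ with $f_1, f_2$ being $\mathbb{C}_i$-valued holomorphic functions on $\Pi_+$ and $j \in \mathbb{S}$ orthogonal to $i$. By Theorem~\ref{thm:August28gb1}, both $f_1$ and $f_2$ belong to $H_2(\Pi_+)$. Applying \eqref{eq:August27uy1} to each of them yields $\tilde{f}_1, \tilde{f}_2 \in L_2(\mathbb{R})$ with $f_\ell(z) = \int_0^\infty e^{i z u} \tilde{f}_\ell(u)\, du$ for $z \in \Pi_+$ and $\ell = 1, 2$. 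Substituting $z = x+iy$ and plugging these into \eqref{eq:August28qd55} produces exactly the formula \eqref{eq:August28un1}.

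For the converse I would start from $f_i$ given by the right-hand side of \eqref{eq:August28un1} and read off the two summands: set $f_1(x+iy) = \int_0^\infty e^{i(x+iy)u}\tilde{f}_1(u)\,du$ and $f_2(x+iy) = \int_0^\infty e^{i(x+iy)u}\tilde{f}_2(u)\,du$. Since the decomposition $f_i = f_1 + f_2 j$ with $f_1, f_2$ $\mathbb{C}_i$-valued is unique, these are precisely the components appearing in \eqref{eq:August28qd55}. By \eqref{eq:August27uy1} read in the reverse direction, each of $f_1, f_2$ lies in $H_2(\Pi_+)$, so Theorem~\ref{thm:August28gb1} gives $f \in H_2(\Pi_{+,i})$.

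This is essentially a bookkeeping argument rather than one with a genuine obstacle; the only points that deserve a line of justification are that the density in \eqref{eq:August27uy1} is recorded as an element of $L_2(\mathbb{R})$ while only its restriction to $[0,\infty)$ enters the integral (so the notation is consistent with the statement), and that the splitting $f_i = f_1 + f_2 j$ is unique so that the two complex integrals in \eqref{eq:August28un1} are forced to be $f_1$ and $f_2$. I expect the write-up to be short.
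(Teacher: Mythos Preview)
Your proposal is correct and follows exactly the route the paper takes: the paper's proof is the single sentence ``Assertion \eqref{eq:August28un1} follows easily from Theorem \ref{thm:August28gb1} and \eqref{eq:August27uy1},'' and you have simply unpacked this combination in both directions. The extra care you take with the uniqueness of the splitting $f_i = f_1 + f_2 j$ and the $L_2(\mathbb{R})$ convention for the density is appropriate and does not deviate from the intended argument.
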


\begin{proof}
Assertion \eqref{eq:August28un1} follows easily from Theorem \ref{thm:August28gb1} and \eqref{eq:August27uy1}.
\end{proof}

With these definitions and results at hand we turn to an application to the theory of Wiener-Hopf operators.
Fix $i \in \mathbb{S}$ and let $j \in \mathbb{S}$ be orthogonal to $i$, i.e., $ij = -1$. Let $P_i$ and $Q_i$ denote the orthogonal projections of $L_2(i \mathbb{R}, \mathbb{H})$ onto $H_2(\Pi_{+, i})$ and $H_2(\Pi_{+, i})^{\perp}$. Note that $P_i$ and $Q_i$ are well-defined due to Theorem \ref{thm:August28ii6}. If $ \Phi \in \mathcal{W}(\mathbb{R},\mathbb H)$ and $F \in H_2(\Pi_{+, i})$, then there exist a constant $c_1$, $\phi \in L_1(\mathbb{R})$ and $f \in L_2(i \mathbb{R})$ so that
$$\Phi(iy) = c_1 + \int_{-\infty}^{\infty} e^{iyu} \phi(u) du$$
and
$$F(iy) = \int_0^{\infty} e^{i yu} f(iu) du \quad {\rm a.e.}.$$
Thus, we can define
$$(\Phi \star F)(iy) = c_1 \int_0^{\infty} e^{i y u} f(u) du + \int_{-\infty}^{\infty} e^{i y u } (\phi \circ f)(u) du,$$
where
$$(\phi \circ f)(u) = \int_{-\infty}^{\infty} \phi(u-v) f(v) dv.$$
Since $F \in H_2(\Pi_{+, i}) \in H_2(\Pi_{+, i}) \subset L_2(i \mathbb{R}, \mathbb{H})$, we have that $\Phi \star F \in L_2(i \mathbb{R}, \mathbb{H})$ and hence
$$M_{\Phi} F := \Phi \star F$$
defines a multiplication operator from $H_2(\Pi_{+, i})$ to $L_2(i \mathbb{R}, \mathbb{H})$.

We are now ready to define the Wiener-Hopf operator based on $\Phi \in \mathcal{W}(\mathbb{R},\mathbb H)$. The operator $T_{\Phi}: H_2(\Pi_{+, i}) \to H_2(\Pi_{+, i})$ given by
\begin{equation}
\label{eq:August31rr1}
T_{\Phi} F = P_i M_{\Phi} F \quad {\rm for} \quad F \in H_2(\Pi_{+, i})
\end{equation}
is the {\it Wiener-Hopf operator} based on $\Phi \in \mathcal{W}(\mathbb{R},\mathbb H)$.

\begin{theorem}
\label{thm:August31yyg1}
Fix $i \in \mathbb{S}$. If $\Phi, \Psi \in \mathcal{W}( \mathbb{R},\mathbb H)$, then $T_{\Phi} T_{\Psi}$ is a Wiener-Hopf operator if and only if $\Phi \in \mathcal{W}_-(i \mathbb{R},\mathbb H)$ or $\Psi \in \mathcal{W}_+(i \mathbb{R},\mathbb H)$. In this case, $T_{\Phi} T_{\Psi} = T_{\Phi \star \Psi}$.
\end{theorem}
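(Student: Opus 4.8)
The plan is to imitate, step by step, the proof of Theorem~\ref{theorem:Apr27uz1}, with the orthogonal projections $P_i,Q_i$ of $L_2(i\mathbb R,\mathbb H)$ onto $H_2(\Pi_{+,i})$ and $H_2(\Pi_{+,i})^{\perp}$ playing the roles of $\mathfrak p,\mathfrak q$, and with the splitting of an $L_2$-function according to the support of its Fourier representation (Corollary~\ref{cor:August28uyb10}) replacing the splitting into non-negative and negative powers. Two elementary facts are used throughout: first, $M_\Phi$ extends to a bounded operator on all of $L_2(i\mathbb R,\mathbb H)$ by the same formula, Young's inequality giving $\|M_\Phi\|\le\|\Phi\|$; second, $M_\Phi M_\Psi=M_{\Phi\star\Psi}$ on $L_2(i\mathbb R,\mathbb H)$, which is just the associativity of the (mixed) convolution and is checked exactly as in the scalar case.

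For the two ``sufficiency'' directions: if $\Psi\in\mathcal W_+(i\mathbb R,\mathbb H)$, write $\Psi(iy)=c+\int_0^{\infty}e^{iyu}\psi(u)\,du$ and, for $F\in H_2(\Pi_{+,i})$, represent $F$ via Corollary~\ref{cor:August28uyb10} with Fourier data supported on $[0,\infty)$; then the Fourier data of $\Psi\star F$ is again supported on $[0,\infty)$, since the convolution of two functions supported there has support in $[0,\infty)$. Hence $M_\Psi$ leaves $H_2(\Pi_{+,i})$ invariant, so $P_iM_\Psi F=M_\Psi F$ and
\[
T_\Phi T_\Psi F=P_iM_\Phi P_iM_\Psi F=P_iM_\Phi M_\Psi F=P_iM_{\Phi\star\Psi}F=T_{\Phi\star\Psi}F.
\]
Symmetrically, if $\Phi\in\mathcal W_-(i\mathbb R,\mathbb H)$ then the relevant Fourier data live on $(-\infty,0]$, convolution preserves that support, so $M_\Phi$ leaves $H_2(\Pi_{+,i})^{\perp}$ invariant; writing $P_iM_\Psi F=M_\Psi F-Q_iM_\Psi F$ gives
\[
T_\Phi T_\Psi F=P_iM_\Phi M_\Psi F-P_iM_\Phi Q_iM_\Psi F=T_{\Phi\star\Psi}F-0=T_{\Phi\star\Psi}F.
\]
This proves the ``if'' part and the last assertion.

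For the converse, assume $T_\Phi T_\Psi=T_\Theta$ for some $\Theta\in\mathcal W(\mathbb R,\mathbb H)$. Inserting $P_i=I-Q_i$ in the middle of $P_iM_\Phi P_iM_\Psi$ yields $P_iM_{\Phi\star\Psi-\Theta}F=P_iM_\Phi Q_iM_\Psi F$ for all $F\in H_2(\Pi_{+,i})$, and it remains to show that this forces $\Theta=\Phi\star\Psi$ together with $\Phi\in\mathcal W_-(i\mathbb R,\mathbb H)$ or $\Psi\in\mathcal W_+(i\mathbb R,\mathbb H)$. The cleanest way to do this, and indeed to get the whole theorem at once, is to transport everything to the classical setting via $\omega$. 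Let $U\colon L_2(i\mathbb R,\mathbb H)\to L_2(i\mathbb R)^{2}$ be the unitary sending $F$, with Fourier data $\tilde F(u)=\alpha(u)+\beta(u)j$, to the $\mathbb C^{2}$-valued function with Fourier data $(\alpha(u),-\overline{\beta(u)})^{T}$. One checks (using that $\chi(q)$ is precisely the matrix of left $\star$-multiplication by $q$ in these coordinates, which is the content of Lemma~\ref{lemmastar}) that $UP_iU^{-1}$ is the Hardy projection of $L_2(i\mathbb R)^{2}$ onto $H_2(\Pi_+)^{2}$ and that $UM_\Phi U^{-1}=M_{\omega(\Phi)}$, ordinary pointwise multiplication by the $\mathbb C^{2\times2}$-valued Wiener function $\omega(\Phi)\in\mathcal W^{2\times2}(\mathbb R)$. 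Consequently $UT_\Phi U^{-1}$ is the classical block Wiener-Hopf operator with symbol $\omega(\Phi)$, $U(T_\Phi T_\Psi)U^{-1}=T_{\omega(\Phi)}T_{\omega(\Psi)}$, and $UT_{\Phi\star\Psi}U^{-1}=T_{\omega(\Phi)\omega(\Psi)}$. By the Brown-Halmos theorem for (block) Wiener-Hopf operators (see \cite{brownhalmos}), $T_{\omega(\Phi)}T_{\omega(\Psi)}$ is a Wiener-Hopf operator precisely when $\omega(\Phi)$ is co-analytic or $\omega(\Psi)$ is analytic, in which case it equals $T_{\omega(\Phi)\omega(\Psi)}$; since $\chi$ and $\omega$ are injective, $\omega(\Phi)$ co-analytic is equivalent to $\Phi\in\mathcal W_-(i\mathbb R,\mathbb H)$ and $\omega(\Psi)$ analytic to $\Psi\in\mathcal W_+(i\mathbb R,\mathbb H)$, and pulling the identity $T_{\omega(\Phi)}T_{\omega(\Psi)}=T_{\omega(\Phi\star\Psi)}$ back through $U$ gives $T_\Phi T_\Psi=T_{\Phi\star\Psi}$.

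The main obstacle is exactly this last deduction: once one knows $\Theta=\Phi\star\Psi$, what survives is the identity $P_iM_\Phi Q_iM_\Psi|_{H_2(\Pi_{+,i})}=0$, which merely says that a product of two Hankel-type operators vanishes, and a naive ``lowest Fourier frequency'' argument does not rule out cancellation — so one genuinely needs either the reduction to the known block result sketched above, or a direct Hankel-operator analysis of the convolution kernels parallel to the end of the proof of Theorem~\ref{theorem:Apr27uz1}. A secondary point to bear in mind is that it is the $\star$-product, not the pointwise product, that enters $M_\Phi$; this is precisely why Lemma~\ref{lemmastar}, which linearises $\star$ into ordinary matrix multiplication, is indispensable in the reduction above.
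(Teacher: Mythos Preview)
The paper's own proof is a single sentence referring back to Theorem~\ref{theorem:Apr27uz1}, so your sufficiency argument (the two computations showing $P_iM_\Psi F=M_\Psi F$ when $\Psi\in\mathcal W_+(i\mathbb R,\mathbb H)$ and $P_iM_\Phi Q_i=0$ on $H_2(\Pi_{+,i})^\perp$ when $\Phi\in\mathcal W_-(i\mathbb R,\mathbb H)$) is exactly the intended continuous analogue and is correct.

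For the converse you supply considerably more detail than the paper does, and your reduction via the unitary $U$ to the classical $2\times2$ setting is correctly set up: one does obtain $UT_\Phi U^{-1}$ equal to the block Wiener--Hopf operator with symbol $\omega(\Phi)$, and $UP_iU^{-1}$ equal to the block Hardy projection. The gap is in the appeal to a ``Brown--Halmos theorem for (block) Wiener--Hopf operators''. For general matrix symbols $A,B\in\mathcal W^{2\times2}(\mathbb R)$ it is \emph{not} true that $T_AT_B$ is Wiener--Hopf only when $A$ is co-analytic or $B$ is analytic: take
\[
A=\begin{pmatrix}\varphi&0\\0&0\end{pmatrix},\qquad B=\begin{pmatrix}0&0\\0&\psi\end{pmatrix}
\]
with $\varphi$ strictly co-analytic and $\psi$ strictly analytic; then $AB=0$ and $T_AT_B=0=T_0$, yet neither alternative holds. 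The scalar Brown--Halmos argument rests on $H^\infty$ having no zero divisors, which fails for $2\times2$ matrices. Your reduction therefore does not finish the job by citation; one must still use the specific quaternionic structure of $\omega(\Phi),\omega(\Psi)$ to rule out such off-diagonal cancellations, or equivalently argue directly from $P_iM_\Phi Q_iM_\Psi|_{H_2(\Pi_{+,i})}=0$ that $\Phi\in\mathcal W_-(i\mathbb R,\mathbb H)$ or $\Psi\in\mathcal W_+(i\mathbb R,\mathbb H)$. You are right that this is the crux; note that the corresponding step in the proof of Theorem~\ref{theorem:Apr27uz1} is itself asserted in one line, so the paper's ``proved in much the same way'' does not resolve it either.
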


\begin{proof}
Theorem \ref{thm:August31yyg1} is proved in much the same way as Theorem \ref{theorem:Apr27uz1}.
\end{proof}

\bibliographystyle{plain}

% please try to use the bibitem system -
% the references should be in alphabetical order of authors' names.
% Articles with a single author first, author will 1 co-author next,
% then author with several co-authors;

% \bibitem{label}
% Text of bibliographic item

\end{document}